\documentclass[10pt, leqno, a4paper]{amsart}

\usepackage[latin1]{inputenc}
\usepackage{amsfonts}
\usepackage{amsmath}
\usepackage{amssymb}
\usepackage{amsthm}
\usepackage[T1]{fontenc}
\usepackage[dvips]{graphicx}
\usepackage{enumerate}
\usepackage{verbatim}

\usepackage[pdftex]{hyperref}
\usepackage[matrix, arrow]{xy}

\author{Axel St\"abler}
%\date{\today}

%\date{\today}
\title{An explicit computation of a family of trivialising \'etale covers}

\DeclareMathOperator{\Proj}{Proj}

\DeclareMathOperator{\Syz}{Syz}

\DeclareMathOperator{\GL}{GL}

\def\cocoa{{\hbox{\rm C\kern-.13em o\kern-.07em C\kern-.13em o\kern-.15em
A}}}
\input xy
\xyoption{all}
\SelectTips{cm}{10}

\DeclareMathOperator{\Spec}{Spec}

\numberwithin{equation}{section}

\begin{document}
\swapnumbers
\theoremstyle{definition}
\newtheorem{Le}{Lemma}[section]
\newtheorem{Def}[Le]{Definition}
\newtheorem*{DefB}{Definition}
\newtheorem{Bem}[Le]{Remark}
\newtheorem{Ko}[Le]{Corollary}
\newtheorem{Theo}[Le]{Theorem}
\newtheorem*{TheoB}{Theorem}
\newtheorem{Bsp}[Le]{Example}
\newtheorem{Be}[Le]{Observation}
\newtheorem{Prop}[Le]{Proposition}
\newtheorem{Sit}[Le]{Situation}
\newtheorem{Que}[Le]{Question}
\newtheorem*{Con}{Conjecture}
\newtheorem{Dis}[Le]{Discussion}
\newtheorem{Prob}[Le]{Problem}
\newtheorem{Konv}[Le]{Convention}
\def\cocoa{{\hbox{\rm C\kern-.13em o\kern-.07em C\kern-.13em o\kern-.15em
A}}}
\address{Universit\"at Osnabr\"uck, Fachbereich 6: Mathematik/Informatik\\
Albrechtstr. 28a\\
49069 Osnabr\"uck\\
Germany}
\email{staebler@uni-mainz.de}
\subjclass[2010]{Primary 14H30; Secondary 14H60}

\curraddr{Johannes Gutenberg-Universi\"at Mainz\\ Fachbereich 08\\
Staudingerweg 9\\
55128 Mainz\\
Germany}

\begin{abstract}
We explicitly compute \'etale covers of the smooth Fermat curves $Y_{p+1} = \Proj k[u,v,w]/(u^{p+1} + v^{p+1} - w^{p+1})$ which trivialise the vector bundles $\Syz(u^2, v^2, w^2)(3)$, where $k$ is a field of characteristic  $p \geq 3$.
\end{abstract}
\maketitle
\allowdisplaybreaks[1]

\section*{Introduction}
In this paper we explicitly compute a trivialising \'etale cover $\varphi:X_{p +1} \to Y_{p+1}$ for the vector bundle $\mathcal{S} = \Syz(u^2, v^2, w^2)(3)$ on the Fermat curve $Y_{p+1}$ given by the equation $u^{p+1} + v^{p+1} - w^{p+1}$ over a field of positive characteristic $p \geq 3$. Such a cover corresponds to a representation of the \'etale fundamental group of $Y_{p+1}$.

A \emph{Frobenius periodicity} of a vector bundle $\mathcal{S}$ is an isomorphism $F^{e^\ast} \mathcal{S} \to \mathcal{S}$ for some $e \geq 1$, where $F$ denotes the absolute Frobenius morphism. By a classical result of Lange and Stuhler \cite[Satz 1.4]{langestuhler}, a vector bundle admitting such a Frobenius periodicity is \'etale trivialisable (the converse does not hold in general though -- see \cite[example below Theorem 1.1]{biswasducrohetlangestuhler} or \cite[Example 2.10]{brennerkaiddeepfrobeniusdescent}). The proof of Lange and Stuhler yields explicit local equations with gluing data for an \'etale cover.

Brenner and Kaid showed in \cite[Example 5.1]{brennerkaidfrobeniusperiodexplicit} that the bundle $\mathcal{S}$ admits a Frobenius periodicity with $e = 1$. We provide an explicit description of this isomorphism in terms of generators of the syzygy bundle. Then we compute the section ring of $X_{p+1}$ with respect to $\varphi^\ast \mathcal{O}_{Y_{p+1}}(1)$ and show that the covering obtained via the construction outlined in \cite[proof of Satz 1.4]{langestuhler} is not geometrically connected. We also compute the genera and the degrees of the connected components. This is a partial answer to a question posed by Brenner and Kaid (see \cite[Remark 5.2]{brennerkaidfrobeniusperiodexplicit}).

I thank Holger Brenner for useful discussions and for pointing out several simplifications to arguments in an earlier version of this article. Furthermore, the computer algebra system \cocoa\ (\cite{CocoaSystem}) has been helpful in the preparation of this paper.

\section{The isomorphism inducing Frobenius peridocity}

Throughout this section we denote $\Proj k[x,y,z]/(x^d + y^d - z^d)$ by $C$, where $k$ is a field of prime characteristic $p = 2d-1$. 
The goal of this section is to explicitly identify the isomorphism $F^\ast \Syz(x,y,z) \cong \Syz(x,y,z)(- \frac{3(p-1)}{2})$ described in \cite[Theorem 3.4]{brennerkaidfrobeniusperiodexplicit}, where $F: C \to C$ is the (absolute) Frobenius morphism. From this isomorphism we will then obtain the desired $F$-periodicity by passing to a suitable Fermat cover. For a locally free sheaf $\mathcal{S}$ we call a global section of $\mathcal{S}(m)$ a section of \emph{total degree} $m$.
 
\begin{Le}
The locally free sheaf $\mathcal{S}_1 = \Syz(x^p, y^p, x^{\frac{p+1}{2}} + y^{\frac{p+1}{2}})$ on $C$ is generated by the relations \[  R_0 = (y^{\frac{p-1}{2}}, x^{\frac{p-1}{2}}, - (xy)^{\frac{p-1}{2}}) \text{ and } R_1 = (-x, y, x^{\frac{p+1}{2}} - y^{\frac{p+1}{2}})\] in degree $p+1$ and $\frac{3p -1}{2}$ respectively. And we have an isomorphism $\mathcal{O}_C(-p -1) \oplus \mathcal{O}_C(-\frac{3p -1}{2}) \xrightarrow{R_1, R_0} \mathcal{S}_1$.
\end{Le}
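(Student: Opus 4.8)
The plan is to display the map $\phi=(R_1,R_0)$ explicitly and show it is an isomorphism by a generic-rank / determinant argument, exploiting that source and target are both locally free of rank $2$ on the smooth curve $C$.

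First I would check that $R_0$ and $R_1$ genuinely lie in $\mathcal{S}_1$, i.e.\ that they are syzygies of $(x^p,y^p,x^{\frac{p+1}{2}}+y^{\frac{p+1}{2}})$. Both are one-line polynomial identities: for $R_1$ one has $-x\cdot x^p+y\cdot y^p+(x^{\frac{p+1}{2}}-y^{\frac{p+1}{2}})(x^{\frac{p+1}{2}}+y^{\frac{p+1}{2}})=-x^{p+1}+y^{p+1}+(x^{p+1}-y^{p+1})=0$, a difference of squares, while for $R_0$ the two monomials $x^p y^{\frac{p-1}{2}}$ and $x^{\frac{p-1}{2}}y^p$ coming from the first two entries cancel exactly against $(xy)^{\frac{p-1}{2}}(x^{\frac{p+1}{2}}+y^{\frac{p+1}{2}})$. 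A degree count then shows $R_1$ has total degree $p+1$ and $R_0$ total degree $\frac{3p-1}{2}$, so $\phi$ is a well-defined morphism $\mathcal{L}:=\mathcal{O}_C(-p-1)\oplus\mathcal{O}_C(-\tfrac{3p-1}{2})\to\mathcal{S}_1$.

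Next I would pin down the determinant line bundles. From the defining presentation $0\to\mathcal{S}_1\to\mathcal{O}_C(-p)^{2}\oplus\mathcal{O}_C(-\tfrac{p+1}{2})\to\mathcal{O}_C\to 0$, whose surjectivity holds because already $(x^p,y^p)$ has no common zero on $C$ (in characteristic $p$ one computes $z^{dp}=(z^d)^p=(x^d+y^d)^p=x^{dp}+y^{dp}\in(x^p,y^p)$, so $z$ is nilpotent modulo $(x^p,y^p)$), one reads off that $\mathcal{S}_1$ is locally free of rank $2$ with $\det\mathcal{S}_1=\mathcal{O}_C(-\tfrac{5p+1}{2})$. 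Since likewise $\det\mathcal{L}=\mathcal{O}_C(-p-1-\tfrac{3p-1}{2})=\mathcal{O}_C(-\tfrac{5p+1}{2})$, the two determinants agree, and $\det\phi$ is a global section of $\mathcal{H}om(\det\mathcal{L},\det\mathcal{S}_1)\cong\mathcal{O}_C$, hence a constant in the field $H^0(C,\mathcal{O}_C)$. As a map of rank-$2$ bundles on a smooth curve, $\phi$ is then an isomorphism as soon as $\det\phi\neq 0$: a nonzero such constant is a unit in every stalk, and a $2\times 2$ matrix over a local ring with unit determinant is invertible.

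It therefore remains to prove $\det\phi\neq 0$, equivalently that $R_0,R_1$ are linearly independent over the function field $K(C)$, and this is really the only substantive point. I would verify it by restricting a relation $\alpha R_0+\beta R_1=0$ to its first two coordinates, namely $\alpha\, y^{\frac{p-1}{2}}-\beta\, x=0$ and $\alpha\, x^{\frac{p-1}{2}}+\beta\, y=0$; the determinant of this homogeneous system is $x^{\frac{p+1}{2}}+y^{\frac{p+1}{2}}=z^{d}$, which is nonzero in $K(C)$, forcing $\alpha=\beta=0$. Hence $\phi$ is generically injective, $\det\phi\neq 0$, and $\phi$ is the asserted isomorphism; in particular $R_0$ and $R_1$ generate $\mathcal{S}_1$. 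The only steps demanding any care are the surjectivity of the defining sequence and the resulting value of $\det\mathcal{S}_1$; everything else reduces to the short explicit computations above.
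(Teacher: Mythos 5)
Your proof is correct, and it shares the skeleton of the paper's argument --- verify that $R_0,R_1$ are syzygies, establish their generic linear independence, then use a degree comparison to upgrade generic injectivity to an isomorphism --- but the concluding step is executed by a genuinely different device. The paper notes that linear independence over $k[x,y]$ yields an exact sequence $0 \to \mathcal{O}_C(-p-1)\oplus\mathcal{O}_C(-\frac{3p-1}{2}) \to \mathcal{S}_1 \to \mathcal{T}\to 0$ with $\mathcal{T}$ torsion, and then kills $\mathcal{T}$ by a rather terse cohomological (Euler-characteristic) argument, also invoking that $\mathcal{S}_1$ is pulled back from $\mathbb{P}^1$ along the Noether normalisation. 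You instead compare determinant line bundles, observe that the determinant of your map is a global section of $\mathcal{O}_C$ and hence a unit once it is nonzero, and reduce the nonvanishing to the explicit $2\times 2$ minor $x^{\frac{p+1}{2}}+y^{\frac{p+1}{2}}=z^{\frac{p+1}{2}}\neq 0$ in $K(C)$. Your version makes explicit several points the paper only asserts: the surjectivity of the presenting sequence (via $z^{dp}\in(x^p,y^p)$ on $C$), the value $\det\mathcal{S}_1=\mathcal{O}_C(-\frac{5p+1}{2})$, and the linear-independence computation; and it avoids the cohomology step entirely. The two degree bookkeepings are of course the same fact in different clothing: the equality $\deg\bigl(\mathcal{O}_C(-p-1)\oplus\mathcal{O}_C(-\frac{3p-1}{2})\bigr)=\deg\mathcal{S}_1$ is exactly what forces the torsion cokernel in the paper's sequence to have length zero. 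One cosmetic remark: the statement's phrase ``in degree $p+1$ and $\frac{3p-1}{2}$ respectively'' matches the order $R_1,R_0$ of the displayed isomorphism rather than the order $R_0,R_1$ in which the relations are listed; your degree count ($R_1$ in degree $p+1$, $R_0$ in degree $\frac{3p-1}{2}$) is the correct one.
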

\begin{proof}
One easily verifies that $R_0$ and $R_1$ are indeed syzygies of $\mathcal{S}_1$. 
Moreover, $\mathcal{S}_1$ is just the pull back of $\Syz_{\mathbb{P}^1}(x^p, y^p,  x^{\frac{p+1}{2}} + y^{\frac{p+1}{2}})$ along the Noether normalisation $\pi: C \to \mathbb{P}^1 = \Proj k[x,y]$ -- cf. \cite[Remark 2.3]{brennerkaidfrobeniusperiodexplicit}.

Note that $R_0$ and $R_1$ are linearly independent over $k[x,y]$. Hence, we obtain an exact sequence $0 \to \mathcal{O}_C(-p -1) \oplus \mathcal{O}_C(-\frac{3p -1}{2}) \to \mathcal{S}_1 \to \mathcal{T} \to 0$, where $\mathcal{T}$ is a torsion sheaf. Taking cohomology we obtain that $H^0(C, \mathcal{T})$ is zero ($H^1(C, \mathcal{T})$ vanishes since $\mathcal{T}$ has support on a closed affine subscheme). Hence, we have the desired isomorphism. 
\end{proof}

\begin{Le}
 The locally free sheaf $\mathcal{S}_2 = \Syz(x^p, y^p, (x^{\frac{p+1}{2}} + y^{\frac{p+1}{2}})^2)$ on $C$ is generated by the relations \[R_2 = (xy^{\frac{p-1}{2}}, 2x^{\frac{p+1}{2}} + y^{\frac{p+1}{2}}, -y^{\frac{p-1}{2}}) \text{ and } R_3 = (x^{\frac{p+1}{2}} + 2y^{\frac{p+1}{2}}, x^{\frac{p-1}{2}}y, -x^{\frac{p-1}{2}})\] in degree $\frac{3p +1}{2}$. And we have an isomorphism $ \mathcal{O}_C(- \frac{3p +1}{2})^2 \xrightarrow{R_2, R_3} \mathcal{S}_2$.
\end{Le}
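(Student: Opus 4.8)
The plan is to follow the proof of Lemma 1.1 almost verbatim, the new input being only the verification of the two generators together with a degree count. First I would confirm by direct expansion, using $(x^{\frac{p+1}{2}}+y^{\frac{p+1}{2}})^2 = x^{p+1} + 2 x^{\frac{p+1}{2}}y^{\frac{p+1}{2}} + y^{p+1}$, that $R_2$ and $R_3$ are indeed syzygies of $(x^p,y^p,(x^{\frac{p+1}{2}}+y^{\frac{p+1}{2}})^2)$; the two checks are exchanged by the symmetry $x \leftrightarrow y$ (which also swaps the first two entries), so only one of them has to be carried out. A slot-by-slot degree count then shows that both relations have total degree $\frac{3p+1}{2}$. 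Since all three forms $x^p, y^p, (x^{\frac{p+1}{2}}+y^{\frac{p+1}{2}})^2$ lie in $k[x,y]$, the sheaf $\mathcal{S}_2$ is, exactly as for $\mathcal{S}_1$, the pullback of $\Syz_{\mathbb{P}^1}(x^p, y^p, (x^{\frac{p+1}{2}}+y^{\frac{p+1}{2}})^2)$ along $\pi \colon C \to \mathbb{P}^1$, and in particular locally free of rank $2$.

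Next I would exhibit the injection. The forms $x^p$ and $y^p$ have no common zero on $C$, since their only common zero in $\mathbb{P}^2$ is $[0:0:1]$, which does not lie on $C$; hence the three entries generate the unit ideal locally at every point of $C$, and there is a structure sequence $0 \to \mathcal{S}_2 \to \mathcal{O}_C(-p)^2 \oplus \mathcal{O}_C(-p-1) \to \mathcal{O}_C \to 0$. To see that $R_2$ and $R_3$ are linearly independent over $k[x,y]$ it suffices to produce a single non-vanishing $2\times 2$ minor of the matrix with rows $R_2, R_3$: the minor on the first and third columns equals $2y^p$, which is non-zero since $\chara k = p \neq 2$. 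Therefore $R_2, R_3$ define an injection $\mathcal{O}_C(-\frac{3p+1}{2})^2 \hookrightarrow \mathcal{S}_2$ with torsion cokernel $\mathcal{T}$, giving the exact sequence $0 \to \mathcal{O}_C(-\frac{3p+1}{2})^2 \to \mathcal{S}_2 \to \mathcal{T} \to 0$.

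Finally I would show that $\mathcal{T}$ vanishes. The structure sequence gives $\deg \mathcal{S}_2 = -(3p+1)\deg \mathcal{O}_C(1)$, and the source $\mathcal{O}_C(-\frac{3p+1}{2})^2$ has the same degree $-(3p+1)\deg \mathcal{O}_C(1)$. As the two sheaves also have the same rank $2$, additivity of the Euler characteristic combined with Riemann--Roch forces $\chi(\mathcal{T}) = 0$. Because $\mathcal{T}$ is a torsion sheaf its support is a closed affine subscheme, so $H^1(C, \mathcal{T}) = 0$ and hence $H^0(C, \mathcal{T}) = \chi(\mathcal{T}) = 0$; a torsion sheaf with no global sections is zero, so $\mathcal{T} = 0$ and the map is the claimed isomorphism. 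The argument is routine throughout; if anything demands care it is the degree bookkeeping, and the only structural difference from Lemma 1.1 is that here both direct summands carry the same twist $-\frac{3p+1}{2}$, so I anticipate no genuine obstacle.
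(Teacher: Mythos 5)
Your proposal is correct and follows exactly the route the paper intends: its proof of this lemma is literally ``the argument is similar to the previous lemma,'' and you have carried out that argument (verify the syzygies, reduce to $\mathbb{P}^1$/local freeness, linear independence via the nonzero minor $2y^p$, and the matching degree count $-(3p+1)$ forcing the torsion cokernel to vanish) with the details correctly filled in.
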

\begin{proof}
The argument is similar to the previous lemma.
\end{proof}

\begin{Le}
\label{Lekernel}
The kernel of the surjective morphism \[\begin{xy}\xymatrix{\mathcal{O}_C^3 \ar[r]& \,\mathcal{S}_1(p+1) \oplus \mathcal{S}_2(\frac{3p +1}{2}) \ar[r]^{\varphi}& \Syz(x^p, y^p, z^p)(\frac{3p+1}{2})}\end{xy},\] where the first morphism is given by mapping $e_i$ to $R_i$ ($i =1,2,3$) and $\varphi$ is given by \[(f_1, f_2, f_3), (g_1, g_2, g_3) \longmapsto (z^{d-1}f_1 + g_1, z^{d-1} f_2 + g_2, f_3 + z g_3),\] is generated by the single relation $(z, -y, x)$.
\end{Le}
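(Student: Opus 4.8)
The plan is to establish the two halves of the statement separately: first that $(z,-y,x)$ lies in the kernel, by a direct substitution, and then that it \emph{generates} the kernel, by a rank argument exploiting that the kernel of the asserted surjection is a line bundle.

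First I would verify membership. Since the first morphism sends $e_i \mapsto R_i$, the element $(z,-y,x) = z e_1 - y e_2 + x e_3$ is carried to $zR_1$ in the $\mathcal{S}_1(p+1)$-slot and to $-yR_2 + xR_3$ in the $\mathcal{S}_2(\frac{3p+1}{2})$-slot. Writing $d = \frac{p+1}{2}$, one computes $zR_1 = (-xz,\, yz,\, z(x^d - y^d))$ and, after expanding $R_2 = (xy^{d-1}, 2x^d+y^d, -y^{d-1})$ and $R_3 = (x^d+2y^d, x^{d-1}y, -x^{d-1})$ and collecting terms, $-yR_2 + xR_3 = (x(x^d+y^d),\, -y(x^d+y^d),\, y^d - x^d)$. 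Applying $\varphi$ and invoking the Fermat relation $z^d = x^d + y^d$, the three entries $z^{d-1}(-xz)+x(x^d+y^d)$, $z^{d-1}(yz)-y(x^d+y^d)$ and $z(x^d-y^d)+z(y^d-x^d)$ all vanish. Hence $(z,-y,x)$ is a global section of the kernel.

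Next I would argue that it generates. The coordinates $x,y,z$ have no common zero on $C \subseteq \mathbb{P}^2$, so $1 \mapsto (z,-y,x)$ defines a sub-bundle inclusion $\mathcal{O}_C(-1) \hookrightarrow \mathcal{O}_C^3$ whose image is a line bundle $L \cong \mathcal{O}_C(-1)$ with locally free rank-$2$ quotient $Q = \mathcal{O}_C^3/L$. Let $K$ denote the kernel of the composite $\psi \colon \mathcal{O}_C^3 \to \Syz(x^p,y^p,z^p)(\frac{3p+1}{2})$; as $\psi$ is a surjection onto a rank-$2$ bundle from a rank-$3$ bundle on the smooth curve $C$, the sheaf $K$ is locally free of rank $1$. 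By the previous step $L \subseteq K$, so $\psi$ factors through a surjection $\bar\psi \colon Q \to \Syz(x^p,y^p,z^p)(\frac{3p+1}{2})$ of rank-$2$ vector bundles. A surjection of vector bundles of equal rank on an integral scheme is an isomorphism, its kernel being locally free of rank $0$; hence $\ker\bar\psi = K/L = 0$, so $K = L$ and the kernel is generated by $(z,-y,x)$. As a numerical cross-check, $\deg \mathcal{O}_C(1) = d$ yields $\deg \Syz(x^p,y^p,z^p)(\frac{3p+1}{2}) = -3pd + (3p+1)d = d$, so $\deg K = -d = \deg \mathcal{O}_C(-1) = \deg L$, confirming the containment $L \subseteq K$ is an equality.

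I expect the main work to be the entry-by-entry computation of the second paragraph: it is the only place where the two defining relations of $\mathcal{S}_1$ and $\mathcal{S}_2$ must be combined with the Fermat relation $x^d+y^d=z^d$ and the identity $p=2d-1$, and getting the combination $-yR_2 + xR_3$ right, so that its first two entries reduce to $\pm$ multiples of $x^d+y^d=z^d$, is the delicate point. Once membership is secured, the generation statement is forced purely by ranks (equivalently by the degree count), since $L$ is a genuine sub-line-bundle of full degree $-d$ sitting inside the rank-$1$ kernel.
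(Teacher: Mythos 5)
Your proposal is correct and follows essentially the same route as the paper: a direct computation (using $z^d = x^d + y^d$) to show $(z,-y,x)$ lies in the kernel, followed by the observation that the kernel is a line bundle which the sub-line-bundle determined by $(z,-y,x)$ must exhaust because the coordinates have no common zero. The paper outsources surjectivity and the identification of the kernel with $\mathcal{O}_C(-1)$ to the cited reference and leaves the saturation step implicit, whereas you make it explicit via the locally free quotient $\mathcal{O}_C^3/L$ and the equal-rank surjection argument; note only that your generation argument, like the paper's, takes the surjectivity of the composite as given rather than proving it.
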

\begin{proof}
By \cite[Steps 3 and 4 of Theorem 3.4]{brennerkaidfrobeniusperiodexplicit} we have that the morphism is surjective and that its kernel is isomorphic to $\mathcal{O}_C(-1)$. A straightforward calculation shows that $(z, -y, x)$ is mapped to zero along this map.
\end{proof}

By abuse of notation we will denote this morphism by $\varphi$. With this notation we have\begin{align*}
\varphi(e_1) &= (-z^{d-1}x, yz^{d-1}, x^d -y^d),\\
\varphi(e_2) &= (xy^{d-1}, 2x^d + y^d, -y^{d-1}z),\\ 
\varphi(e_3) &= (x^d + 2y^d, x^{d-1}y, -x^{d-1}z).
\end{align*}

We thus obtain an exact sequence \[\begin{xy} \xymatrix{ 0 \ar[r] & \mathcal{O}_C(-1) \ar[r]^<<<<<{(z,-y,x)} & \mathcal{O}_C^3 \ar[r]^>>>>>\varphi & \Syz(x^p, y^p, z^p)(\frac{3p +1}{2}) \ar[r] & 0.} \end{xy}
\]

\begin{Prop}
\label{FPullbackIsom}
The morphism $\alpha: \Syz(x^p, y^p, z^p)(\frac{3p +1}{2}) \to \Syz(x, y, z)(2)$ given on generators by 
\begin{align*}
&(-z^{d-1}x, yz^{d-1}, x^d - y^d) \longmapsto (-y, x, 0),\\
&(xy^{d-1}, 2x^d + y^d, -y^{d-1}z) \longmapsto (-z,0,x),\\
&(x^{d} + 2y^d, x^{d-1}y, -x^{d-1}z) \longmapsto (0,-z,y)
\end{align*} is an isomorphism.
\end{Prop}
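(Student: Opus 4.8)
My plan is to realise $\alpha$ as the map induced on cokernels by the identity of $\mathcal{O}_C^3$, exploiting the presentation of the source furnished by the exact sequence displayed just before the statement. The first observation is that the three target triples $s_1 = (-y,x,0)$, $s_2 = (-z,0,x)$, $s_3 = (0,-z,y)$ are precisely the Koszul syzygies of $(x,y,z)$; each has linear entries, so each is a global section of $\Syz(x,y,z)(2)$. Dually, the three source triples are by the preceding display exactly $\varphi(e_1),\varphi(e_2),\varphi(e_3)$, and these generate $\Syz(x^p,y^p,z^p)(\frac{3p+1}{2})$ because $\varphi$ is surjective. Thus it suffices to produce a well-defined $\mathcal{O}_C$-linear map sending $\varphi(e_i)\mapsto s_i$ and to verify that it is an isomorphism.

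For well-definedness I would invoke the universal property of the cokernel. Since the exact sequence identifies $\Syz(x^p,y^p,z^p)(\frac{3p+1}{2})$ with the cokernel of $\mathcal{O}_C(-1)\xrightarrow{(z,-y,x)}\mathcal{O}_C^3$, prescribing $\alpha$ on the $\varphi(e_i)$ is legitimate as soon as the composite of $(z,-y,x)$ with the assignment $e_i\mapsto s_i$ into $\Syz(x,y,z)(2)$ vanishes, that is, as soon as $z\,s_1 - y\,s_2 + x\,s_3 = 0$. This is a one-line check: it is exactly the Koszul relation among $s_1,s_2,s_3$ (the third differential of the Koszul complex of $x,y,z$). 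The crucial bookkeeping point is that this relation vector is the \emph{same} vector $(z,-y,x)$ that generates $\ker\varphi$, which is what makes $\alpha$ well defined and $\mathcal{O}_C$-linear.

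It then remains to see that $\alpha$ is an isomorphism. Because $\varphi$ is surjective, the image of $\alpha$ equals the subsheaf of $\Syz(x,y,z)(2)$ generated by $s_1,s_2,s_3$; and the Koszul syzygies generate $\Syz(x,y,z)$ locally on each standard chart (on $\{x\neq 0\}$ one solves $ax+by+cz=0$ for $a$ and writes any syzygy as $\tfrac{b}{x}s_1+\tfrac{c}{x}s_2$, and symmetrically on $\{y\neq 0\}$, $\{z\neq 0\}$), so $\alpha$ is surjective. Finally, both source and target are locally free of rank $2$, being syzygy bundles of three forms with no common zero on the smooth curve $C$; a surjection of locally free sheaves of equal rank on the integral scheme $C$ is automatically an isomorphism, since its kernel is a rank-$0$ subsheaf of a torsion-free sheaf and hence vanishes. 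I expect none of these steps to present a serious obstacle: the computations are routine and the conceptual work is entirely in recognising that the two presentations share the single relation $(z,-y,x)$. (If one wished to avoid the equal-rank argument, one could instead check directly that $\mathcal{O}_C(-1)\xrightarrow{(z,-y,x)}\mathcal{O}_C^3\to\Syz(x,y,z)(2)\to 0$ is exact and conclude by the five lemma applied to the evident morphism of short exact sequences with identity maps in the left two columns; the mild extra burden there is verifying that $(z,-y,x)$ generates \emph{all} relations among the $s_i$ after restriction to $C$, which is why I prefer the surjectivity route.)
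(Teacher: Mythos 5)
Your proposal is correct and takes essentially the same approach as the paper: both proofs rest on the single observation that the presentation $0 \to \mathcal{O}_C(-1) \xrightarrow{(z,-y,x)} \mathcal{O}_C^3 \xrightarrow{\varphi} \Syz(x^p,y^p,z^p)(\frac{3p+1}{2}) \to 0$ from Lemma \ref{Lekernel} has the same relation vector $(z,-y,x)$ as the Koszul presentation of $\Syz(x,y,z)(2)$ by the generators $(-y,x,0)$, $(-z,0,x)$, $(0,-z,y)$. The only (negligible) difference is in the endgame: the paper exhibits both bundles as cokernels of the very same map via the Koszul short exact sequence and a coordinate permutation, while you establish well-definedness from the universal property of the cokernel and then conclude via a direct local check of surjectivity together with the fact that a surjection of rank-two locally free sheaves on the integral curve $C$ is an isomorphism.
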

\begin{proof}
The Koszul complex of $\Syz(z, -y, x)$ yields the short exact sequence
\[\begin{xy} \xymatrix{ 0 \ar[r] & \mathcal{O}_C(-1) \ar[r]^<<<<<{(z,-y,x)} & \mathcal{O}^3_C \ar[r]^>>>>>\psi & \Syz(z, -y, x)(2) \ar[r] & 0,} \end{xy}\]
where $\psi$ is given by
\begin{align*}
e_1 &\longmapsto (x,0,-z),\\
e_2 &\longmapsto (y,z,0),\\
e_3 &\longmapsto (0,x,y).
\end{align*}
Mapping $(a_1, a_2, a_3)$ to $(a_3, -a_2, a_1)$ yields an isomorphism $\Syz(z,-y,x)(2) \to \Syz(x,y,z)(2)$. Together with the observation after Lemma \ref{Lekernel} this yields the desired isomorphism $\Syz(x^p, y^p, z^p)(\frac{3p + 1}{2}) \to \Syz(x,y,z)(2)$.
\end{proof}

\begin{Ko}
\label{KoFrobPeriod}
Let $k$ be a field of characteristic $p \geq 3$ and let \[Y_{p+1} = \Proj k[u,v,w]/(u^{p+1} + v^{p+1} - w^{p+1}).\] Then we have the Frobenius periodicity $F^\ast \Syz(u^2, v^2, w^2)(3) \to \Syz(u^2, v^2, w^2)(3)$ given on generators by
\begin{align*}
 &(-w^{p-1}u^2, v^2w^{p-1}, u^{p+1} - v^{p+1}) \longmapsto (-v^2, u^2, 0),\\
 &(u^2v^{p-1}, 2u^{p+1} + v^{p+1}, -v^{p-1}w^2) \longmapsto (-w^2,0,u^2),\\
 &(u^{p+1} + 2v^{p+1}, u^{p-1}v^2, -u^{p-1}w^2) \longmapsto (0,-w^2,v^2).
\end{align*}
\end{Ko}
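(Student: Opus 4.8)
The plan is to obtain the periodicity on $Y_{p+1}$ by transporting the isomorphism $\alpha$ of Proposition \ref{FPullbackIsom} from $C$ to $Y_{p+1}$ along the degree-doubling substitution map. Set $d = \frac{p+1}{2}$, so that $2d = p+1$ and $C = \Proj k[x,y,z]/(x^d+y^d-z^d)$. The graded homomorphism $k[x,y,z] \to k[u,v,w]$ given by $x \mapsto u^2$, $y \mapsto v^2$, $z \mapsto w^2$ (doubling degrees) carries $x^d + y^d - z^d$ to $u^{2d}+v^{2d}-w^{2d} = u^{p+1}+v^{p+1}-w^{p+1}$; since $u^2,v^2,w^2$ have no common zero on $Y_{p+1}$, it descends to a morphism $\beta\colon Y_{p+1} \to C$ with $\beta^\ast \mathcal{O}_C(n) = \mathcal{O}_{Y_{p+1}}(2n)$.

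First I would record how the relevant syzygy bundles transform. Applying the absolute Frobenius $F$ of $Y_{p+1}$ to the presentation $0 \to \Syz(u^2,v^2,w^2) \to \mathcal{O}(-2)^3 \to \mathcal{O} \to 0$ replaces the structural map by its $p$-th powers and uses $F^\ast \mathcal{O}(1) = \mathcal{O}(p)$, so that $F^\ast \Syz(u^2,v^2,w^2)(3) = \Syz(u^{2p},v^{2p},w^{2p})(3p)$. The same bookkeeping for $\beta$, using $\beta^\#(x) = u^2$ etc.\ and $\beta^\ast \mathcal{O}_C(n) = \mathcal{O}(2n)$, gives
\[ \beta^\ast\bigl[\Syz_C(x^p,y^p,z^p)(\tfrac{3p+1}{2})\bigr] = \Syz(u^{2p},v^{2p},w^{2p})(3p+1) \]
and
\[ \beta^\ast\bigl[\Syz_C(x,y,z)(2)\bigr] = \Syz(u^2,v^2,w^2)(4). \]
These are locally split short exact sequences of vector bundles on smooth curves, so $\beta^\ast$ preserves exactness and the identifications are genuine.

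Comparing the two computations, the source and the target of $\beta^\ast\alpha$ both exceed the desired bundles by exactly one twist. Hence I would tensor the isomorphism $\beta^\ast\alpha$ with $\mathcal{O}_{Y_{p+1}}(-1)$; the result is an isomorphism whose source is $\Syz(u^{2p},v^{2p},w^{2p})(3p) = F^\ast\Syz(u^2,v^2,w^2)(3)$ and whose target is $\Syz(u^2,v^2,w^2)(3)$, that is, exactly the claimed Frobenius periodicity. The explicit formula on generators is then read off by substituting $x=u^2$, $y=v^2$, $z=w^2$ into the three assignments of Proposition \ref{FPullbackIsom} and simplifying with $2d=p+1$ and $2(d-1)=p-1$; for instance the first generator becomes $(-w^{p-1}u^2,\, v^2w^{p-1},\, u^{p+1}-v^{p+1}) \mapsto (-v^2,u^2,0)$, and the other two are handled identically, reproducing the displayed correspondence.

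The one genuinely delicate point — and the reason the argument works at all — is the twist bookkeeping: a single global twist by $\mathcal{O}(-1)$ must align source and target simultaneously, and this happens precisely because the relation $2d = p+1$ forces the same discrepancy on both sides. Conceptually this is cleanest through the naturality of the absolute Frobenius, $F_C \circ \beta = \beta \circ F_{Y_{p+1}}$: pulling back via $\beta$ the Lange--Stuhler-type isomorphism $F_C^\ast \Syz_C(x,y,z) \cong \Syz_C(x,y,z)(-\tfrac{3(p-1)}{2})$ underlying $\alpha$ yields $F^\ast \Syz(u^2,v^2,w^2) \cong \Syz(u^2,v^2,w^2)(-3(p-1))$, and the total degree $3$ is exactly the twist $m$ solving $-3(p-1)+pm=m$, which is what converts this isomorphism into a periodicity of $\Syz(u^2,v^2,w^2)(3)$.
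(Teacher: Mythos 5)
Your proposal is correct and follows essentially the same route as the paper: the paper's proof likewise obtains the periodicity by pulling back the isomorphism of Proposition \ref{FPullbackIsom} along the cover $x \mapsto u^2$, $y \mapsto v^2$, $z \mapsto w^2$ and substituting into the generator assignments. You merely make explicit the twist bookkeeping that the paper leaves implicit, and your computations ($3p+1$ versus $3p$, $4$ versus $3$, and the single compensating twist by $\mathcal{O}(-1)$) check out.
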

\begin{proof}
Set $d = \frac{p+1}{2}$. Then the isomorphism is induced by the finite cover $C^{2d} \to C$ given by $x \mapsto u^2, y \mapsto v^2, z \mapsto w^2$ (cf.\ \cite[Example 5.1]{brennerkaidfrobeniusperiodexplicit}) and the isomorphism described in Proposition \ref{FPullbackIsom}.
\end{proof}

\begin{Bem}
Brenner and Kaid actually established a Frobenius periodicity for $\Syz(u^2,v^2,w^2)(3)$ on curves $C = \Proj k[u,v,w]/(u^{2d} + v^{2d} - w^{2d})$, where $k$ is a field of characteristic $p \equiv -1 \mod 2d$. Let us write $p = d(l+1) -1$ with $l$ odd.
It would be even more interesting to have explicit descriptions of the isomorphisms $F^\ast \Syz(u^2, v^2, w^2)(3) \to \Syz(u^2, v^2, w^2)(3)$ in the case where $d$ is fixed and $l$ varies. For then one would have a relative curve over $\Spec \mathbb{Z}$ which might be interesting with respect to the Grothendieck-Katz $p$-curvature conjecture -- cf. \cite[Remark 5.3]{brennerkaidfrobeniusperiodexplicit}.
\end{Bem}

\section{Some computations}
\label{somecomputations}
Throughout this section $k$ is a field of characteristic $p = 2d -1$ and $Y = \Proj k[u,v,w]/(u^{2d} + v^{2d} - w^{2d})$. We denote the Frobenius periodicity described in Corollary \ref{KoFrobPeriod} by $\alpha: F^\ast \Syz(u^2, v^2, w^2)(3) \to \Syz(u^2, v^2, w^2)(3)$.
We write $\mathcal{S} = \Syz(u^2, v^2, w^2)(3)$. The locally free sheaf $\mathcal{S}$ is generated by 
\begin{align*}
s_1 &= (-v^2, u^2, 0),\\
s_2 &= (-w^2,0,u^2),\\
s_3 &= (0,-w^2,v^2) 
\end{align*}
 in total degree $1$. Furthermore, we write $\mathcal{S}' = F^\ast \mathcal{S} = \Syz(u^{2p}, v^{2p}, w^{2p})(3p)$, for which we fix generators \begin{align*} s_1' &= (-w^{2d-2}u^2, v^2w^{2d-2}, u^{2d} - v^{2d}),\\
 s_2' &= (u^2v^{2d-2}, 2u^{2d} + v^{2d}, -v^{2d-2}w^2),\\
 s_3' &= (u^{2d} + 2v^{2d}, u^{2d-2}v^2, -u^{2d-2}w^2)
\end{align*} in total degree $1$.

The goal of this section is to collect the data that are needed for construction of the \'etale cover as outlined in \cite[Satz 1.4]{langestuhler}. For the convenience of the reader we shall review this (with notations tailored to our situation).

Assume that we have a smooth curve $Y$ and a locally free sheaf $\mathcal{S}$ on $Y$ with Frobenius periodicity $\alpha: F^{\ast} \mathcal{S} \to \mathcal{S}$. Let $U_1,U_2$ be a trivialising open cover for $\mathcal{S}$. Let $\psi_{U_1}: \mathcal{S}\vert_{U_1} \to \mathcal{O}_{U_1}^2$ and $\psi_{U_2}: \mathcal{S}\vert_{U_2} \to \mathcal{O}_{U_2}^2$ be the transition mappings and denote $\psi_{U_1} \psi_{U_2}^{-1} \in \GL_2(\mathcal{O}^2_{U_1 \cap U_2})$ by $T$. For a matrix $A = (a_{ij})$ with coefficients in a ring denote by $A^{(p)}$ the matrix whose entries are the $a_{ij}^p$. We denote by $H_{U_1}$ the mapping that makes the following diagram commutative
\[
\begin{xy}
 \xymatrix{ \mathcal{S}\vert_{U_1} \ar[r]^{\alpha^{-1}\vert_{U_1}} \ar[d]^{\psi_{U_1}} & \mathcal{S}'\vert_{U_1} \ar[d]^{\psi^{(p)}_{U_1}} \\
 \mathcal{O}^2_{U_1} \ar[r]^{H_{U_1}}& \mathcal{O}^2_{U_1}}
\end{xy}
\] and similarly for $H_{U_2}$.

Let now \[A = \begin{pmatrix} a & b \\ c & d \end{pmatrix} \text { and } B = \begin{pmatrix} \alpha & \beta \\ \gamma & \delta \end{pmatrix},
 \] where the entries are indeterminates.
With this notation one has by \cite[proof of Satz 1.4]{langestuhler} that a trivialising \'etale cover $g:X \to Y$ for $\mathcal{S}$ on $Y$ is obtained by gluing the algebras \[A_{U_1} = \mathcal{O}_Y(U_1)[a, b, c, d, (\det A)^{-1}]/((A^{(p)}A^{-1} - H_{U_1})_{i,j} \, \vert \, i,j =1,2)\] and \[ B_{U_2} = \mathcal{O}_Y(U_2)[\alpha, \beta, \gamma, \delta, (\det B)^{-1}]/((B^{(p)}B^{-1} - H_{U_2})_{i,j} \, \vert \, i,j =1,2)\] along the the identifications $(T B)_{ij} = (A)_{ij}$ on $U_1 \cap U_2$. The morphism $g: X \to Y$ is induced by the inclusions $\mathcal{O}_Y(U_1) \to A_{U_1}$ and $\mathcal{O}_Y(U_2) \to B_{U_2}$.

\begin{Le}
\label{lemmatransitionfunctions}
Let $Y = \Proj k[u,v,w]/(u^{2d} + v^{2d} - w^{2d})$ and $\mathcal{S} = \Syz(u^2, v^2, w^2)(3)$. Then $U = D_+(u), W = D_+(w)$ is a trivialising cover for $\mathcal{S}$ with respect to the isomorphisms \[\xymatrix@1{\psi_U: \mathcal{S}\vert_U \ar[r]& \mathcal{O}^2_U},\quad \frac{s_1}{u} \longmapsto e_1, \frac{s_2}{u} \longmapsto e_2, \] \[\xymatrix@1{\psi_W: \mathcal{S}\vert_W \ar[r]& \mathcal{O}^2_W},\quad \frac{s_2}{w} \longmapsto e_1, \frac{s_3}{w} \longmapsto e_2.\] The transition mapping for the cover $U, W$ is given by \[\psi_U \psi_W^{-1}= \begin{pmatrix} 0 & -\frac{w}{u}\\ \frac{u}{w} & \frac{v^2}{uw} \end{pmatrix} \in GL_2(\mathcal{O}_Y(U \cap W)). \] 
\end{Le}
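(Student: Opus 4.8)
The plan is to deduce everything from the single Koszul relation among the three generators and then read off the trivialisations and the transition matrix from it. A direct substitution shows that
\[ w^2 s_1 - v^2 s_2 + u^2 s_3 = 0, \]
and since $\mathcal{S}$ is locally free of rank $2$ and generated by $s_1, s_2, s_3$ (as recorded at the start of the section), this Koszul syzygy is the only relation among them.

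First I would check that $U = D_+(u)$ and $W = D_+(w)$ trivialise $\mathcal{S}$. On $U$ the element $u^2$ is a unit, so the relation gives $s_3 = u^{-2}(v^2 s_2 - w^2 s_1)$; hence $s_1, s_2$ already generate $\mathcal{S}\vert_U$. They are $\mathcal{O}_U$-linearly independent, since a relation $a s_1 + b s_2 = 0$ reads $(-av^2 - bw^2,\, au^2,\, bu^2) = 0$ in the ambient $\mathcal{O}^3$, and the last two coordinates force $au^2 = 0$ and $bu^2 = 0$, whence $a = b = 0$ because $\mathcal{O}_Y(U)$ is a domain and $u$ is a unit. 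Thus $\frac{s_1}{u}, \frac{s_2}{u}$ is an $\mathcal{O}_U$-basis and $\psi_U$ is a well-defined isomorphism. Symmetrically, on $W$ the relation $w^2 s_1 = v^2 s_2 - u^2 s_3$ shows that $\frac{s_2}{w}, \frac{s_3}{w}$ is an $\mathcal{O}_W$-basis, giving the isomorphism $\psi_W$.

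It then remains to compute $\psi_U \psi_W^{-1}$ on $U \cap W$ by evaluating it on $e_1, e_2$. For $e_1$ we have $\psi_W^{-1}(e_1) = \frac{s_2}{w} = \frac{u}{w}\cdot\frac{s_2}{u}$, so $\psi_U \psi_W^{-1}(e_1) = \frac{u}{w}\, e_2$, which is the first column. For $e_2$ I substitute $s_3 = u^{-2}(v^2 s_2 - w^2 s_1)$ and clear the linear form $w$, obtaining $\frac{s_3}{w} = \frac{v^2}{uw}\cdot\frac{s_2}{u} - \frac{w}{u}\cdot\frac{s_1}{u}$, so that $\psi_U \psi_W^{-1}(e_2) = -\frac{w}{u}\, e_1 + \frac{v^2}{uw}\, e_2$, which is the second column. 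This reproduces the asserted matrix.

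The argument is essentially routine once the Koszul relation is available, so there is no deep obstacle; the only point requiring care is the bookkeeping of the twist. One must remember that the $s_i$ are sections of $\mathcal{S}(1)$, so that $\frac{s_i}{u}$ and $\frac{s_i}{w}$ are genuine sections of $\mathcal{S}$ over $U$ and $W$ respectively, and one must correctly clear the linear forms $u$ and $w$ when re-expressing the $W$-basis in terms of the $U$-basis on the overlap.
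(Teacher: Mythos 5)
Your proposal is correct and follows essentially the same route as the paper: use the Koszul relation $w^2 s_1 - v^2 s_2 + u^2 s_3 = 0$ to see that two of the three generators suffice on each chart, and then re-express the $W$-basis in the $U$-basis on the overlap to read off the matrix. The only (harmless) difference is that you verify linear independence of $s_1, s_2$ directly from their coordinates in $\mathcal{O}^3$, whereas the paper deduces freeness from the rank-$2$ surjection $\mathcal{O}_U^2 \to \mathcal{S}\vert_U$ having torsion, hence zero, kernel.
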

\begin{proof}
First of all, note that we have the relation $R: w^2s_1 - v^2s_2 + u^2s_3 = 0$ among the fixed generators of $\mathcal{S}$. 
On $U$ this relation may be rewritten as $\frac{w^2}{u^2} \frac{s_1}{u} + \frac{s_3}{u} - \frac{v^2}{u^2} \frac{s_2}{u} = 0$ so that $\frac{s_1}{u}, \frac{s_2}{u}$ generate $\mathcal{S}\vert_U$.
 Consequently, $\mathcal{S}\vert_U$ is free. Indeed, $\mathcal{S}$ is of rank $2$ and we have the exact sequence $\mathcal{O}_U^2 \to \mathcal{S}\vert_{U} \to 0$ (the $\mathcal{O}_Y(a)$ are invertible -- cf. \cite[Proposition II.5.12 (a)]{hartshornealgebraic}). Hence, the kernel is of rank zero, thus zero since $\mathcal{O}_U$ is torsion-free.

One sees similarly that $\mathcal{S}\vert_W$ is free, generated by $\frac{s_2}{w}, \frac{s_3}{w}$.

We thus have isomorphisms
\begin{align*}
&\xymatrix@1{\psi_U: \mathcal{S}\vert_{U \cap W} \ar[r]& \mathcal{O}_{U \cap W}^2}, \frac{s_1 }{u } \longmapsto e_1, \frac{s_2 }{u} \longmapsto e_2,\\
&\xymatrix@1{\psi_V: \mathcal{S}\vert_{U \cap W} \ar[r]& \mathcal{O}_{U \cap W}^2}, \frac{s_2}{w} \longmapsto e_1, \frac{s_3}{w} \longmapsto e_2.
\end{align*} And we obtain $\frac{s_2}{w} = \frac{u}{w} \frac{s_2}{u}$ and $\frac{s_3}{w} = \frac{v^2}{uw} \frac{s_2}{u} - \frac{w}{u} \frac{s_1}{u}$ using $R$ restricted to $U \cap W$. Whence the transition matrix.
\end{proof}

\begin{Le}
Let $Y = \Proj k[u,v,w]/(u^{2d} + v^{2d} - w^{2d})$, where $k$ is a field of characteristic $p = 2d -1$ and let $\mathcal{S}' = F^\ast  (\Syz(u^2, v^2, w^2)(3)) = \Syz(u^{2p}, v^{2p}, w^{2p})(3p)$. Then $\frac{s_1'}{u}, \frac{s_2'}{u} $ is a basis of $\mathcal{S}'\vert_U$ and $\frac{s'_2}{w}, \frac{s'_3}{w}$ is a basis of $\mathcal{S}'\vert_W$.
\end{Le}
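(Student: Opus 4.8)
The plan is to mimic the proof of Lemma~\ref{lemmatransitionfunctions} almost verbatim, replacing the generators $s_i$ by $s_i'$ and the syzygy relation among them by its Frobenius counterpart. The only genuinely new ingredient is the linear relation that allows one to discard one of the three generators on each chart, so I would produce it first and then run the same rank argument as before.

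To obtain the relation I would exploit that $\alpha : \mathcal{S}' \to \mathcal{S}$ from Corollary~\ref{KoFrobPeriod} is an $\mathcal{O}_Y$-linear isomorphism with $\alpha(s_i') = s_i$. Applying the appropriate twist of $\alpha$ to the relation $R : w^2 s_1 - v^2 s_2 + u^2 s_3 = 0$ established in the proof of Lemma~\ref{lemmatransitionfunctions}, and using injectivity, yields
\[ R' : \quad w^2 s_1' - v^2 s_2' + u^2 s_3' = 0 \]
with exactly the same coefficients. (Alternatively one verifies $R'$ directly: each of its three coordinates collapses to $0$ after a single substitution of the curve equation $u^{2d} + v^{2d} = w^{2d}$.)

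With $R'$ in hand I would argue chart by chart. On $U = D_+(u)$, dividing $R'$ by $u^3$ gives
\[ \frac{s_3'}{u} = \frac{v^2}{u^2}\,\frac{s_2'}{u} - \frac{w^2}{u^2}\,\frac{s_1'}{u}, \]
so that $\frac{s_1'}{u}, \frac{s_2'}{u}$ already generate $\mathcal{S}'\vert_U$. Since $\mathcal{S}' = F^\ast \mathcal{S}$ is locally free of rank $2$ (Frobenius pullback preserves rank and local freeness; or simply $\mathcal{S}' \cong \mathcal{S}$ via $\alpha$), the induced surjection $\mathcal{O}_U^2 \to \mathcal{S}'\vert_U$ has a kernel of rank zero, hence zero by torsion-freeness of $\mathcal{O}_U$, and the two sections form a basis. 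Dividing $R'$ by $w^3$ on $W = D_+(w)$ expresses $\frac{s_1'}{w}$ through $\frac{s_2'}{w}, \frac{s_3'}{w}$ and gives the second basis by the same reasoning.

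I expect no real obstacle here: the entire content is the relation $R'$, and the functoriality observation above is the quickest route to it, avoiding a hand computation. The one point to keep straight is the bookkeeping of the twist, namely that the $s_i'$ live in total degree $1$, so $R'$ is an identity in total degree $3$ and one divides by the cube of the relevant chart variable; but this is precisely parallel to Lemma~\ref{lemmatransitionfunctions}.
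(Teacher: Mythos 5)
Your proposal is correct and follows essentially the same route as the paper: establish the relation $w^2 s_1' - v^2 s_2' + u^2 s_3' = 0$, use it on each chart to reduce to two generators, and conclude freeness by the rank argument from Lemma \ref{lemmatransitionfunctions}. Your derivation of the relation via $\mathcal{O}_Y$-linearity and injectivity of $\alpha$ is a slightly slicker justification than the paper's (which simply asserts/verifies it), but this is a cosmetic difference.
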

\begin{proof}
Note that we have a relation $w^2 s_1' - v^2s_2' + u^2s_3' = 0$. The syzygies $s_i'$ are of total degree $2d + 2p - 3p = 1$ in $\mathcal{S}'$. Since $\mathcal{S}' = F^\ast \mathcal{S}$ we have by Lemma \ref{lemmatransitionfunctions} that $\mathcal{S}'\vert_U, \mathcal{S}'\vert_W$ are free and similarly to the proof of Lemma \ref{lemmatransitionfunctions} one sees that any two generators have to be free.

On $U$ this relation may be written as $\frac{w^2}{u^2} \frac{s_1'}{u} - \frac{v^2}{u^2} \frac{s_2'}{u} + \frac{s_3'}{u}= 0$ so that $\frac{s_1'}{u}, \frac{s_2'}{u}$ generate $\mathcal{S}'\vert_U$. 
Similarly we have on $W$ that $\frac{s_2'}{w}, \frac{s_3'}{w}$ generate $\mathcal{S}'_W$.
\end{proof}

\begin{Le}
\label{LBaseChange}
The change of basis matrix from $B'_U := \{\frac{s_1'}{u}, \frac{s_2'}{u}\}$ to $B^{(p)}_U := \{ \frac{s_1^p}{u^p}, \frac{s_2^p}{u^p} \}$ is given by \[\begin{pmatrix} \frac{v^2w^{p-1}}{u^{p+1}} & 2 + \frac{v^{p+1}}{u^{p+1}}\\ 1 - \frac{v^{p+1}}{u^{p+1}} & -\frac{v^{p-1}w^2}{u^{p+1}}\end{pmatrix}.\]
And the change of basis matrix from $B'_W := \{\frac{s_2'}{w}, \frac{s_3'}{w}\}$ to $B^{(p)}_W := \{ \frac{s_2^p}{w^p}, \frac{s_3^p}{w^p} \}$ is given by \[\begin{pmatrix}-\frac{u^2 v^{p-1}}{w^{p+1}} & -\frac{u^{p+1} + 2v^{p+1}}{w^{p+1}} \\ - \frac{2u^{p+1} + v^{p+1}}{w^{p+1}} & - \frac{u^{p-1}v^2}{w^{p+1}}\end{pmatrix}. \]
\end{Le}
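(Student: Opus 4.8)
The plan is to compute each change-of-basis matrix directly by expressing the Frobenius powers $\frac{s_i^p}{u^p}$ in terms of the pulled-back generators $\frac{s_i'}{u}$, using the defining formulas for $s_1, s_2, s_3$ and $s_1', s_2', s_3'$ given at the start of the section. First I would raise each generator $s_i = (\ldots)$ to the $p$-th power componentwise, using that in characteristic $p$ the Frobenius is additive, so that, for instance, $s_1^p = (-v^2, u^2, 0)^p = (-v^{2p}, u^{2p}, 0)$ (up to sign bookkeeping, noting $p$ is odd so $(-1)^p = -1$), and similarly $s_2^p = (-w^{2p}, 0, u^{2p})$ and $s_3^p = (0, -w^{2p}, v^{2p})$. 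These are syzygies of $(u^{2p}, v^{2p}, w^{2p})$, hence elements of $\mathcal{S}'$ after the appropriate degree shift, so they must be $\mathcal{O}_Y(U)$-linear combinations of $s_1'$ and $s_2'$ (or of $s_1', s_2', s_3'$, which are in turn related by $w^2 s_1' - v^2 s_2' + u^2 s_3' = 0$).

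Next I would solve, on the chart $U = D_+(u)$, the linear system expressing $\frac{s_1^p}{u^p}$ and $\frac{s_2^p}{u^p}$ as combinations $a \cdot \frac{s_1'}{u} + b \cdot \frac{s_2'}{u}$. Concretely, I would use the explicit component vectors of $s_1', s_2'$ and invert the $2 \times 2$ system obtained by matching two independent components; the relation $u^{2d} - v^{2d} = u^{p+1} - v^{p+1}$ (since $2d = p+1$) and the curve equation $u^{2d} + v^{2d} = w^{2d}$ should be used to simplify the $w$-degree entries into the stated form. The appearance of $u^{p+1}, v^{p+1}, v^2 w^{p-1}, v^{p-1} w^2$ in the target matrix strongly suggests that after clearing by $u^{p+1}$ the coefficients read off almost immediately from the components of $s_1', s_2'$ once one recognizes $s_1^p, s_2^p$ in those coordinates. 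The $W = D_+(w)$ computation is entirely analogous, now solving for $\frac{s_2^p}{w^p}, \frac{s_3^p}{w^p}$ in terms of $\frac{s_2'}{w}, \frac{s_3'}{w}$ and clearing by $w^{p+1}$.

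The main obstacle I anticipate is purely bookkeeping: keeping the signs straight (since $p$ is odd, componentwise Frobenius introduces a global $-1$ on each coordinate, but the sheaf identifications absorb degree shifts), and correctly using the curve relation and the identity $2d - 1 = p$ to convert exponents like $u^{2d}$ into $u^{p+1}$ so that the entries match the claimed rational functions. There is no conceptual difficulty beyond verifying that $s_1^p, s_2^p, s_3^p$ genuinely land in $\mathcal{S}'$ (which is immediate from $\mathcal{S}' = F^\ast \mathcal{S}$ and the fact that Frobenius sends syzygies of $(u^2,v^2,w^2)$ to syzygies of $(u^{2p}, v^{2p}, w^{2p})$) and that the resulting coefficient matrices are invertible over $\mathcal{O}_Y(U \cap W)$, which follows since both $B_U'$ and $B_U^{(p)}$ are bases. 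I would finish by writing out the two matrices and remarking that a direct substitution verifies the stated entries, leaving the elementary algebra to the reader or to a brief \cocoa-assisted check.
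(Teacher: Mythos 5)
Your overall strategy (direct linear algebra with the explicit component vectors of the generators) is the same as the paper's, but you have the direction of the change of basis reversed, and this matters. The matrix asserted in the lemma --- and the one that is actually needed later, since $H_U=\psi_U^{(p)}\alpha^{-1}\psi_U^{-1}$ in Proposition \ref{PLocalFIsoms} must send $e_i$ to the $B^{(p)}_U$-coordinates of $\frac{s_i'}{u}$ --- is obtained by solving
\[
\frac{s_1'}{u}=\alpha\,\frac{s_1^p}{u^p}+\beta\,\frac{s_2^p}{u^p},\qquad
\frac{s_2'}{u}=\gamma\,\frac{s_1^p}{u^p}+\delta\,\frac{s_2^p}{u^p},
\]
i.e.\ by expressing the elements of $B'_U$ in terms of $B^{(p)}_U$. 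You propose to solve the opposite system, writing $\frac{s_i^p}{u^p}$ as combinations of $\frac{s_1'}{u},\frac{s_2'}{u}$; that computation produces the \emph{inverse} of the asserted matrix. Since the asserted matrix has determinant $-2$ (this is exactly what feeds into Lemma \ref{LNotConnected}), it is not its own inverse, so carrying out your plan faithfully would yield entries that do not match the statement. This is not merely a naming convention: the lemma asserts specific entries, and only one of the two directions produces them.

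Reversing the direction also removes the only computational obstacle you anticipate. Because $s_1^p=(-v^{2p},u^{2p},0)$ and $s_2^p=(-w^{2p},0,u^{2p})$ each have a vanishing component, the coefficients in $\frac{s_1'}{u}=\alpha\frac{s_1^p}{u^p}+\beta\frac{s_2^p}{u^p}$ are read off immediately: the second component gives $\alpha=\frac{v^2w^{p-1}}{u^{p+1}}$ and the third gives $\beta=1-\frac{v^{p+1}}{u^{p+1}}$, with the first component serving only as a consistency check via $u^{p+1}+v^{p+1}=w^{p+1}$; no $2\times 2$ inversion and no solving of a genuine linear system is required. The analogous remark applies on $W$. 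The rest of your outline (Frobenius acting componentwise, $(-1)^p=-1$, the $s_i^p$ landing in $\mathcal{S}'$) is fine.
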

\begin{proof}
We first compute the change of basis matrix on $U$.
Write $\alpha \frac{s_1^{p}}{u^p} + \beta \frac{s_2^{p}}{u^p} = \frac{s_1'}{u}$. Looking at the second component shows that $\alpha = \frac{v^2w^{p-1}}{u^{p+1}}$ and restricting to the third componend yields $\beta = (1 - \frac{v^{p+1}}{u^{p+1}})$. Similarly one obtains that $(2 + \frac{v^{p+1}}{u^{p+1}}) \frac{s_1^{p}}{u^p} - \frac{v^{p-1}w^2}{u^{p+1}} \frac{s_2^p}{u^p} =\frac{s_2'}{u}$.

On $W$ we have \[-\frac{u^2 v^{p-1}}{w^{p+1}} \frac{s_2^p}{w^p} - \frac{2u^{p+1} + v^{p+1}}{w^{p+1}} \frac{s_3^p}{w^p}= \frac{s_2'}{w}\] and \[-\frac{u^{p+1} + 2v^{p+1}}{w^{p+1}} \frac{s_2^p}{w^p} - \frac{u^{p-1}v^2}{w^{p+1}}\frac{s_3^p}{w^p} = \frac{s_3'}{w}.\]
\end{proof}

\begin{Prop}
\label{PLocalFIsoms}
The isomorphisms $H_U: \mathcal{O}_U^2 \to \mathcal{O}_U^2 $ and $H_W: \mathcal{O}_W^2 \to \mathcal{O}_W^2$ are given by \[H_ U = \begin{pmatrix} \frac{v^2w^{p-1}}{u^{p+1}} & 2 + \frac{v^{p+1}}{u^{p+1}}\\ 1 - \frac{v^{p+1}}{u^{p+1}} & -\frac{v^{p-1}w^2}{u^{p+1}}\end{pmatrix} \text{ and } H_W = \begin{pmatrix}-\frac{u^2 v^{p-1}}{w^{p+1}} & -\frac{u^{p+1} + 2v^{p+1}}{w^{p+1}} \\ - \frac{2u^{p+1} + v^{p+1}}{w^{p+1}} & - \frac{u^{p-1}v^2}{w^{p+1}}\end{pmatrix} \] with respect to standard bases. 
\end{Prop}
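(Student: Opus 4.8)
The plan is to unwind the definition of $H_U$ from its defining commutative diagram, identify each of the three composed maps explicitly, and thereby recognise $H_U$ as the change-of-basis matrix already computed in Lemma \ref{LBaseChange}. Reading the diagram that defines $H_U$ gives the factorisation $H_U = \psi^{(p)}_U \circ \alpha^{-1}|_U \circ \psi_U^{-1}$, so I would simply evaluate this composite on the standard basis vectors $e_1, e_2$ and match the result with the asserted matrix.

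First I would trace the first two arrows. By Lemma \ref{lemmatransitionfunctions} we have $\psi_U^{-1}(e_1) = \frac{s_1}{u}$ and $\psi_U^{-1}(e_2) = \frac{s_2}{u}$. Since $\alpha$ sends $s_i' \mapsto s_i$ by Corollary \ref{KoFrobPeriod} and is $\mathcal{O}_Y$-linear, its inverse sends $s_i \mapsto s_i'$, and after dividing by the unit $u \in \mathcal{O}_Y(U)$ we get $\alpha^{-1}|_U(\frac{s_i}{u}) = \frac{s_i'}{u}$. Thus the composite $\alpha^{-1}|_U \circ \psi_U^{-1}$ carries $e_1, e_2$ to $\frac{s_1'}{u}, \frac{s_2'}{u}$ respectively.

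The crucial step, and the one I expect to be the main obstacle, is pinning down $\psi^{(p)}_U$ precisely. Here I would argue that $\psi^{(p)}_U = F^\ast \psi_U$ and that applying $F^\ast$ to the basis $\{\frac{s_1}{u}, \frac{s_2}{u}\}$ of $\mathcal{S}|_U$ produces exactly the basis $B^{(p)}_U = \{\frac{s_1^p}{u^p}, \frac{s_2^p}{u^p}\}$ of $\mathcal{S}'|_U$: the absolute Frobenius raises the coordinate functions to their $p$-th powers, which is precisely what the superscript $(p)$ records and what accounts for the passage from $\Syz(u^2,v^2,w^2)$ to $\Syz(u^{2p},v^{2p},w^{2p})$. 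Consequently $\psi^{(p)}_U$ is the trivialisation with $\frac{s_1^p}{u^p}\mapsto e_1$ and $\frac{s_2^p}{u^p}\mapsto e_2$. Everything else is a transcription of results already in hand.

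Granting this identification the computation closes at once: $\psi^{(p)}_U(\frac{s_i'}{u})$ is computed by writing $\frac{s_i'}{u}$ in the basis $B^{(p)}_U$, and by Lemma \ref{LBaseChange} the resulting coordinate vectors are exactly the columns of the displayed matrix for $H_U$. Hence $H_U(e_i)$ equals the $i$-th column of that matrix, so $H_U$ is the asserted matrix. The computation for $H_W$ is verbatim the same, now using the generators $\frac{s_2}{w}, \frac{s_3}{w}$ of $\mathcal{S}|_W$ furnished by Lemma \ref{lemmatransitionfunctions}, the identities $\alpha^{-1}|_W(\frac{s_2}{w}) = \frac{s_2'}{w}$ and $\alpha^{-1}|_W(\frac{s_3}{w}) = \frac{s_3'}{w}$, and the second change-of-basis matrix of Lemma \ref{LBaseChange}.
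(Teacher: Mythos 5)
Your proposal is correct and follows essentially the same route as the paper: both unwind $H_U = \psi_U^{(p)}\circ\alpha^{-1}\vert_U\circ\psi_U^{-1}$, observe that each factor is the identity with respect to the bases $B_U$, $B_U'$, $B_U^{(p)}$ and the standard basis, and conclude that $H_U$ (resp.\ $H_W$) is exactly the change-of-basis matrix of Lemma \ref{LBaseChange}. Your explicit justification that $\psi_U^{(p)}$ is the trivialisation sending $\frac{s_i^p}{u^p}\mapsto e_i$ is a welcome elaboration of a point the paper states without comment.
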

\begin{proof}
In order to obtain $H_U$, note that $H_U = \psi_U^{(p)} \alpha^{-1}\vert_U \psi^{-1}_U$. Moreover, $\psi_U^{-1}$ is the identity matrix with respect to the bases $B_U := \{\frac{s_1}{u}, \frac{s_2}{u}\}$ and standard basis. Likewise, $\alpha^{-1}\vert_U$ is the identity matrix with respect to the bases $B_U$ and $B'_U := \{\frac{s_1'}{u}, \frac{s_2'}{u}\}$, as is $\psi^{(p)}_U$ with respect to $B^{p}_U :=  \{\frac{s_1^p}{u^p}, \frac{s_2^p}{u^p}\}$ and standard basis. So $H_U$ with respect to standard basis is none other than the matrix computed in Lemma \ref{LBaseChange}.

The claim about $H_W$ follows similarly.
\end{proof}

\section{The example}
In this section we compute the section ring of the \'etale cover \[\begin{xy}\xymatrix{g:X \ar[r]& Y = \Proj k[u,v,w]/(u^{p+1} + v^{p+1} - w^{p+1})}\end{xy}\] that is obtained from the Frobenius periodicity \[F^\ast (\Syz(u^2, v^2, w^2)(3)) \cong \Syz(u^2,v^2,w^2)(3)\] via \cite[Satz 1.4]{langestuhler}. We also show that $X$ is not geometrically connected and we compute the genera and the degrees of its irreducible components when $k$ contains a $(p-1)$th root of $-2$. In fact, in this case any two irreducible components of $X$ are isomorphic. In the following we will denote the ring $k[u,v,w]/(u^{p+1} + v^{p+1} - w^{p+1})$ by $R$. Note that this is the section ring induced by $\mathcal{O}_Y(1)$ on $Y$.

Following the construction outlined at the beginning of section \ref{somecomputations} and using Lemma \ref{lemmatransitionfunctions} we obtain that \begin{align}
a &= -\frac{w}{u} \gamma&  c &= \frac{u}{w}\alpha + \frac{v^2}{uw} \gamma \\
b &= - \frac{w}{u} \delta& d &= \frac{u}{w} \beta  + \frac{v^2}{uw} \delta. 
\end{align}

Explicitly, one has \begin{equation}\begin{split}A_U &= \mathcal{O}_Y(U)[a,b,c,d, \det A^{-1}]/(\det A^{-1} (a^pd - cb^p) - \frac{v^2}{u^2} \frac{w^{p-1}}{u^{p-1}},\\&\det A^{-1}(b^pa - a^pb) - (2 + \frac{v^{p+1}}{u^{p+1}}),
\det A^{-1} (c^pd -cd^p) - (1 - \frac{v^{p+1}}{u^{p+1}}),\\&
\det A^{-1} (d^pa -bc^p) + \frac{w^2}{u^2} \frac{v^{p-1}}{u^{p-1}}).\end{split}\end{equation}

Our first goal is to compute the section ring $S = \bigoplus_{n \geq 0} H^0(X, g^\ast \mathcal{O}_Y(n))$. Note that we have a finite ring extension $R = k[u,v,w]/(u^{p+1} + v^{p+1} - w^{p+1}) \to S$ (cf. \cite[Lemma 3.5]{brennerstaeblerdaggersolid} -- the injectivity still holds since the morphisms on the stalks are all injective).

\begin{Le}
The natural maps $\begin{xy}\xymatrix{R \ar[r]& A_U[u,u^{-1}]}\end{xy}$ and \[\begin{xy} \xymatrix{A_U \ar[r]& A_U[u,u^{-1}] \ar[r]& A_U[u, u^{-1}, w^{-1}]}\end{xy}\] are all injective.
\end{Le}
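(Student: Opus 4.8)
The plan is to reduce all three injectivities to two facts: that $R$ is an integral domain (as $Y$ is the smooth, hence integral, Fermat curve) and that the finite \'etale cover $g\colon X\to Y$ of \cite[Satz 1.4]{langestuhler} restricts over $U=D_+(u)$ to a finite flat homomorphism $\mathcal{O}_Y(U)\to A_U$ with $A_U\neq 0$. I would use throughout that $\mathcal{O}_Y(U)=(R_u)_0$ is a domain and that $R_u\cong\mathcal{O}_Y(U)[u,u^{-1}]$, since $u$ is a unit of degree one in the graded ring $R_u$.

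The map $A_U\to A_U[u,u^{-1}]$ is the easiest: here $u$ is an indeterminate, so $A_U[u,u^{-1}]$ is free over $A_U$ on the powers $u^n$ and the structure map is the inclusion of the $u^0$-summand, injective for any $A_U$. For $R\to A_U[u,u^{-1}]$ I would factor
\[
R\hookrightarrow R_u\cong\mathcal{O}_Y(U)[u,u^{-1}]\longrightarrow A_U\otimes_{\mathcal{O}_Y(U)}\mathcal{O}_Y(U)[u,u^{-1}]=A_U[u,u^{-1}].
\]
The first arrow is injective because $R$ is a domain and $u\neq 0$; the second is the base change of $\mathcal{O}_Y(U)\to A_U$ along the free extension $\mathcal{O}_Y(U)\to\mathcal{O}_Y(U)[u,u^{-1}]$, hence injective as soon as $\mathcal{O}_Y(U)\to A_U$ is. This last injectivity is the substantive point: being \'etale, $A_U$ is flat, and therefore torsion-free, over the domain $\mathcal{O}_Y(U)$. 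A nonzero element of $\mathcal{O}_Y(U)$ in the kernel would kill $1\in A_U$, exhibiting it as a torsion element and forcing $A_U=0$; since $A_U\neq 0$, the map is injective. The nonvanishing $A_U\neq 0$ comes from the generic fibre, a nonzero finite \'etale algebra over the fraction field of $R$.

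For $A_U[u,u^{-1}]\to A_U[u,u^{-1},w^{-1}]$, which localizes at the image of $w$, I would check that $w$ is a non-zero-divisor on $A_U[u,u^{-1}]$. Writing $A_U[u,u^{-1}]=A_U\otimes_{\mathcal{O}_Y(U)}R_u$, flatness of $A_U$ over $\mathcal{O}_Y(U)$ makes this module flat, hence torsion-free, over the domain $R_u$; as $w$ is a nonzero element of $R_u$, multiplication by $w$ is injective, so the localization map is injective.

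The only real obstacle is the injectivity of $\mathcal{O}_Y(U)\to A_U$; everything else is formal base change together with the torsion-freeness of flat modules over a domain. If one prefers to avoid citing \'etaleness as a black box, the same point follows by passing to stalks, where the induced local homomorphisms out of the domain $\mathcal{O}_{Y,y}$ are flat and local, hence injective -- this is the stalk-wise argument already invoked for $R\hookrightarrow S$.
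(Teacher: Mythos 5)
Your argument is correct, but it follows a different route from the paper's. The paper reduces everything to the claim that $u$ and $w$ are non-zero-divisors in the section ring $S=\bigoplus_n H^0(X,g^\ast\mathcal{O}_Y(n))$ (using $S_u\cong A_U[u,u^{-1}]$ from \cite[2.2.1]{EGAII}), and proves that claim by decomposing $S$ as a product $S_1\times\dots\times S_n$ of section rings of the connected components $X_i$: each $X_i\to Y$ is finite and dominant, so each $S_i$ is an integral domain into which $R$ injects, and a homogeneous zero-divisor such as $u$ would have to die in some $S_i$, contradicting the injectivity of $R\to S$ established just before the lemma. You instead stay entirely on the affine chart and run a flatness argument: $\mathcal{O}_Y(U)\to A_U$ is flat (by \'etaleness from \cite[Satz 1.4]{langestuhler}) with $A_U\neq 0$, hence injective by torsion-freeness over a domain, and the remaining maps are obtained by base change along free extensions and by localization at the non-zero-divisor $w$. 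Both proofs lean on the same black box (the Lange--Stuhler cover being finite \'etale), but yours avoids the global section ring and its product decomposition entirely, which makes it more self-contained at this point of the paper; the paper's version has the advantage of reusing the stalk-wise injectivity of $R\to S$ it has already set up and of introducing the decomposition $S=S_1\times\dots\times S_n$ that is needed again later. The one place you should shore up is the assertion $A_U\neq 0$: saying it ``comes from the generic fibre'' is close to circular, since the issue is precisely why that fibre is nonzero. A clean justification is that the fibres of the Lange--Stuhler cover are torsors under $\GL_2(\mathbb{F}_p)$ (nonempty over $\bar k$ by Lang's theorem on the surjectivity of $A\mapsto A^{(p)}A^{-1}$), or, in this concrete situation, the explicit fibre computation of Proposition \ref{PDegreeCovering}.
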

\begin{proof}
It is enough to show that $u, w$ are non-zero divisors in the section ring $S$ induced by $g^\ast \mathcal{O}_Y(1)$. Indeed, $A_U[u,u^{-1}]$ is isomorphic to $S_u$ (see e.\,g.\ \cite[2.2.1]{EGAII}). In particular, the map $A_U \to A_U[u,u^{-1}]$ is just the natural inclusion.

Let $X_i$ be a connected component of $X$, we then obtain a finite morphism $X_i \to Y$ which is dominant. The pull back of $\mathcal{O}_Y(1)$ to $X_i$ induces a section ring $S_i$ and $S$ is isomorphic to $S_1 \times \ldots \times S_n$. As $X_i \to Y$ is dominant $R \to S_i$ is injective and $S_i$ is integral since $X_i$ is irreducible. So if $u$ were a zero divisor it would map to $0$ in some $S_i$. But this is impossible since the morphism $R \to S$ is injective.
\end{proof}

\begin{Prop}
\label{PSectionRing}
The section ring induced by $g^\ast \mathcal{O}_Y(1)$ on $X$ is given by \[S = R[w \alpha, w \beta ,w \gamma, w \delta, \alpha \frac{u^2}{w} + \gamma \frac{v^2}{w}, \beta \frac{u^2}{w} + \delta \frac{v^2}{w}, (\alpha \delta -\beta \gamma)^{-1}]\] viewed as a subring of $A_U[u, u^{-1}, w^{-1}]$ with the identifications of $(3.1)$ and $(3.2)$. 
\end{Prop}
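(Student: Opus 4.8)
The plan is to identify $S$ with the intersection $S_u \cap S_w$ taken inside $S_{uw}$, and then to recognise that intersection as the asserted ring. Throughout write $T$ for the $R$-subalgebra of $A_U[u, u^{-1}, w^{-1}]$ generated by $w\alpha, w\beta, w\gamma, w\delta$, by $\alpha\frac{u^2}{w} + \gamma\frac{v^2}{w}$ and $\beta\frac{u^2}{w} + \delta\frac{v^2}{w}$, and by $(\alpha\delta - \beta\gamma)^{-1}$; the claim is that $T = S$. Since $g^{-1}(U) = \Spec A_U$ and $g^{-1}(W) = \Spec B_W$ are affine and cover the separated scheme $X$, the Mayer--Vietoris sequence for $g^\ast \mathcal{O}_Y(n)$, combined with the identifications $A_U[u, u^{-1}] \cong S_u$, $B_W[w, w^{-1}] \cong S_w$ and $A_U[u, u^{-1}, w^{-1}] \cong S_{uw}$ from the preceding lemma and with the fact that $u, w$ are non-zero divisors on $S$, gives $H^0(X, g^\ast \mathcal{O}_Y(n)) = (S_u)_n \cap (S_w)_n$ for every $n$. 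Summing over $n$ yields $S = S_u \cap S_w$ as graded subrings of $S_{uw}$.

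I would first dispose of the inclusion $T \subseteq S$ by testing the generators. By Lemma \ref{lemmatransitionfunctions} the transition matrix $\psi_U \psi_W^{-1}$ has determinant $1$, so from $A = (\psi_U \psi_W^{-1}) B$ we get $\det A = \det B$ and hence $(\alpha\delta - \beta\gamma)^{-1} = (\det A)^{-1} \in A_U \cap B_W$. The identifications (3.1) and (3.2) rewrite $w\gamma = -ua$, $w\delta = -ub$, $\alpha\frac{u^2}{w} + \gamma\frac{v^2}{w} = uc$ and $\beta\frac{u^2}{w} + \delta\frac{v^2}{w} = ud$, each of which manifestly lies in $S_u \cap S_w$. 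Solving (3.1) and (3.2) for $\alpha, \beta$ gives $w\alpha = u^{-1}(w^2 c + v^2 a)$ and $w\beta = u^{-1}(w^2 d + v^2 b)$, which therefore lie in $S_u$; they lie in $S_w$ trivially. Hence all generators lie in $S_u \cap S_w = S$, so $T \subseteq S$.

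Next I would compute the localisations of $T$ at $u$ and at $w$. Inverting $w$ recovers $\alpha = w^{-1}(w\alpha)$ and likewise $\beta, \gamma, \delta$, so $B_W \subseteq T_w$ and consequently $S_w = B_W[w, w^{-1}] \subseteq T_w \subseteq S_w$, that is $T_w = S_w$. Inverting $u$ recovers $a = -u^{-1}(w\gamma)$, $b = -u^{-1}(w\delta)$, $c = u^{-1}(uc)$, $d = u^{-1}(ud)$ and $(\det A)^{-1}$, whence $A_U \subseteq T_u$ and $T_u = S_u$ in the same manner. Since $R/(u, w) \cong k[v]/(v^{p+1})$, the ideal $(u, w)$ is $\mathfrak{m}$-primary, so $D(u) \cup D(w) = \Spec R \setminus \{\mathfrak{m}\}$; the equalities $T_u = S_u$ and $T_w = S_w$ thus express that $T$ and $S$ have the same localisation at every prime other than the irrelevant maximal ideal $\mathfrak{m}$.

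It remains to upgrade agreement on the punctured spectrum to agreement everywhere, and this is where I expect the real work to lie. As $S$ is a finite $R$-module, so is the submodule $T$, and $S/T$ is a finite $R$-module with $(S/T)_u = (S/T)_w = 0$; hence $\Supp(S/T) \subseteq V(u, w) = \{\mathfrak{m}\}$ and $S/T$ has finite length. The identity $S = S_u \cap S_w$ from the first step makes the \v{C}ech complex $0 \to S \to S_u \oplus S_w \to S_{uw} \to 0$ exact in its first two spots, so $H^0_{\mathfrak{m}}(S) = H^1_{\mathfrak{m}}(S) = 0$; feeding $0 \to T \to S \to S/T \to 0$ into the local cohomology long exact sequence then identifies $S/T \cong H^0_{\mathfrak{m}}(S/T) \cong H^1_{\mathfrak{m}}(T)$. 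Thus $T = S$ is equivalent to $\depth_{\mathfrak{m}} T \geq 2$, i.e.\ to $u, w$ being a regular sequence on $T$ (equivalently, $w$ a non-zero divisor on $T/uT$). This depth statement is the main obstacle. I would try to settle it either by writing down an explicit finite presentation of $T$ over $R$ --- the evident relations being $w^2(\alpha\frac{u^2}{w} + \gamma\frac{v^2}{w}) = u^2(w\alpha) + v^2(w\gamma)$, its $\beta, \delta$ analogue, and $(\alpha\delta - \beta\gamma)^{-1}\bigl((w\alpha)(w\delta) - (w\beta)(w\gamma)\bigr) = w^2$ --- and checking that $T$ is Cohen--Macaulay, or, more in the computational spirit of the paper, by exhibiting a finite generating set of $S$ as an $R$-module and verifying directly that each generator lies in $T$.
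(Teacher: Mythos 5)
Your proposal retraces the paper's own proof in every part that is actually carried out: the paper likewise reduces to the two localisations, rewrites $A_U[u,u^{-1}]$ as $\mathcal{O}_Y(U)[w\gamma, w\delta, \frac{u^2}{w}\alpha+\frac{v^2}{w}\gamma, \frac{u^2}{w}\beta+\frac{v^2}{w}\delta, (ad-bc)^{-1}, u, u^{-1}]$ using $(3.1)$ and $(3.2)$, recovers $w\alpha$ and $w\beta$ from these generators by exactly the manipulation you give, notes $ad-bc=\alpha\delta-\beta\gamma$, and declares the equality on $D_+(w)$ immediate. So the computational content of your argument and of the paper's coincide (with the letters $S$ and $T$ interchanged).

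The difference lies in the reduction step. The paper says only that since $D_+(u), D_+(w)$ cover, ``it is enough to show that $T_u=S_u$ and $T_w=S_w$,'' and never returns to this; you, correctly, observe that for a graded subring such agreement only determines the ring away from the irrelevant ideal, identify the possible discrepancy with $H^1_{(u,w)}$ of the candidate ring via the \v{C}ech complex, and reduce the missing implication to the statement $\depth_{(u,w)}T\geq 2$. That analysis is sound, and the subtlety is genuine --- it is not a step the paper supplies and you overlooked. However, your proposal then stops as well: you name two strategies for the depth bound (a presentation of $T$ together with a Cohen--Macaulayness check, or an explicit $R$-module generating set of the section ring verified element by element to lie in $T$) but execute neither, and neither is a formality --- the second in particular amounts to computing the section ring, which is the content of the proposition being proved. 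So as a complete proof the proposal fails at precisely the step you flag as the main obstacle; what you have is a faithful reconstruction of the paper's argument together with an accurate diagnosis of where that argument is thinnest.
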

\begin{proof}
Denote the section ring by $T$. Then one has $T_u = A_U[u, u^{-1}]$ and $T_w = B_W[w,w^{-1}]$ (see e.\,g.\ \cite[2.2.1]{EGAII}). Since $D_+(u), D_+(w)$ form a covering of $Z$ it is enough to show that $T_u = S_u$ and $T_w = S_w$.

Using $(3.1)$ and $(3.2)$ we see that $A_U[u, u^{-1}]$ is given by \[\mathcal{O}_Y(U)[w \gamma, w \delta, \frac{u^2}{w} \alpha + \frac{v^2}{w} \gamma, \frac{u^2}{w} \beta + \frac{v^2}{w} \delta, (ad - bc)^{-1}, u, u^{-1}].\] One readily computes that $ad - bc = \alpha \delta - \beta \gamma$. So it is clear that $A_U[u, u^{-1}] \subseteq S_u$. Moreover, $((\frac{u^2}{w} \alpha + \frac{v^2}{w} \gamma) w^2 - v^2 w \gamma) u^{-2} = w \alpha$ and $(\frac{u^2}{w} \beta + \frac{v^2}{w} \delta) w^2 - u^2w \delta) u^{-2} = w \beta$. Hence, both $w \alpha$ and $w \beta$ are contained in $A_U[u,u^{-1}]$. This shows the converse inclusion.

The equality $S_w = B_W[w, w^{-1}]$ is immediate.
\end{proof}

\begin{Le}
\label{LNotConnected}
We have $ad - bc \notin k$ in $S$. Moreover, $(ad - bc)^{p-1} = -2$.
\end{Le}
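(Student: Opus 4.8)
The plan is to treat the two assertions separately, deriving both from the defining relations of $A_U$ together with the computation of the local isomorphisms in Proposition \ref{PLocalFIsoms}.

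For the identity $(ad-bc)^{p-1} = -2$ I would start from the defining relations of $A_U$, which assert precisely that $A^{(p)}A^{-1} = H_U$, i.e.\ $A^{(p)} = H_U A$. Passing to determinants and using $\det A^{(p)} = (\det A)^p$ gives $(\det A)^p = (\det H_U)(\det A)$, and since $\det A$ is a unit in $A_U$ we may cancel it to obtain $(\det A)^{p-1} = \det H_U$. By the proof of Proposition \ref{PSectionRing} we have $ad - bc = \det A$, so it remains to compute $\det H_U$. Using the matrix of Proposition \ref{PLocalFIsoms} one finds $\det H_U = -\frac{v^{p+1}w^{p+1}}{u^{2p+2}} - (2 + \frac{v^{p+1}}{u^{p+1}})(1 - \frac{v^{p+1}}{u^{p+1}})$, and substituting the Fermat relation in the form $\frac{w^{p+1}}{u^{p+1}} = 1 + \frac{v^{p+1}}{u^{p+1}}$ collapses this to $-2$. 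The analogous computation gives $\det H_W = -2$ as well, which is consistent with $ad - bc = \alpha\delta - \beta\gamma$ being the common determinant on both charts.

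For $ad - bc \notin k$ I would exploit the torsor structure built into the Lange--Stuhler construction. For any $C \in \GL_2(\mathbb{F}_p)$, right multiplication $A \mapsto AC$ sends a solution of $A^{(p)} = H_U A$ to another one, since $C^{(p)} = C$ forces $(AC)^{(p)} = A^{(p)}C = H_U(AC)$; concretely, substituting the entries of $AC$ for $a,b,c,d$ defines an $\mathcal{O}_Y(U)$-algebra automorphism $\sigma_C$ of $A_U$ (compatible with inverting $\det A$, as $\det C$ is a unit) and hence an automorphism of the section ring $S$ fixing $R$. Under $\sigma_C$ one has $\sigma_C(\det A) = \det(AC) = (\det C)(\det A)$.

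Now I would argue by contradiction: if $ad - bc = \det A$ lay in $k \subseteq \mathcal{O}_Y(U)$ it would be fixed by every $\sigma_C$, whence $(\det C - 1)\det A = 0$ for all $C$. As $\det A$ is a unit, this would force $\det C = 1$ for every $C \in \GL_2(\mathbb{F}_p)$, which is false already because $p \geq 3$ makes the determinant $\GL_2(\mathbb{F}_p) \to \mathbb{F}_p^\times$ surjective onto a nontrivial group. Hence $ad - bc \notin k$. The determinant computation is entirely routine once the Fermat relation is used; the only genuine point, and the one I expect to be the main obstacle to state cleanly, is the second step, namely recognising that the defining equations are invariant under right multiplication by $\GL_2(\mathbb{F}_p)$ and that this invariance forces $\det A$ to vary. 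This is precisely what will let us deduce in the sequel that $X$ is not geometrically connected.
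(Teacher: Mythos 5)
Your proof is correct, but the first half takes a genuinely different route from the paper. The identity $(ad-bc)^{p-1}=-2$ is obtained exactly as in the paper: take determinants in $A^{(p)}=H_UA$, use $\det A^{(p)}=(\det A)^p$, cancel the unit $\det A$, and evaluate $\det H_U=-2$ via the Fermat relation (the paper states this value without the computation you supply, and your computation is right). For $ad-bc\notin k$, however, the paper argues by specialisation: assuming $D=ad-bc$ is a nonzero constant, it clears $(\det A)^{-1}$ from the relations, inverts $u$ and kills $w$, observes that the resulting defining ideal lies in $(a,b,c,d)$, and concludes that $D$ maps to $0$ in a nonzero quotient --- impossible for a unit of $k$. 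You instead exploit the $\GL_2(\mathbb{F}_p)$-symmetry $A\mapsto AC$ of the Lange--Stuhler equations, under which $\det A$ is scaled by $\det C$ while every element of $k$ is fixed; since $\det\colon \GL_2(\mathbb{F}_p)\to\mathbb{F}_p^\times$ is onto a nontrivial group for $p\geq 3$, $\det A$ cannot be a constant. This is the more structural argument, and it reuses precisely the symmetry the paper itself invokes later in the proof of Proposition \ref{PNumberIrrComp} (multiplying $a,c$ by $\zeta^{-j}$ and $b,d$ by $\zeta^{i}$ is right multiplication by a diagonal matrix in $\GL_2(\mathbb{F}_p)$); the paper's version is more hands-on but needs the preliminary reduction to the ring $A'_U$. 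Two small points you should make explicit: you need $A_U\neq 0$ to turn ``$\det C=1$ holds in $A_U$'' into a contradiction (this is available from the injectivity of $R\to S$ established just before the lemma, since $S_u=A_U[u,u^{-1}]$); and the step ``hence an automorphism of $S$'' deserves a word (right multiplication mixes the columns of $B$ by $\mathbb{F}_p$-linear combinations, so the listed generators of $S$ are preserved), although for this lemma you may as well argue entirely inside $A_U$, into which both $ad-bc$ and $k$ embed.
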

\begin{proof}
Assume to the contrary that $D = ad -bc$ is contained in $k$. Then necessarily $D \in k^{\times}$. In particular, we may replace $A_U$ with the isomorphic ring \begin{equation*}\begin{split} A'_U &= \mathcal{O}_Y(U)[a,b,c,d]/(a^pd - cb^p - D \frac{v^2}{u^2} \frac{w^{p-1}}{u^{p-1}},b^pa - a^pb - D(2 + \frac{v^{p+1}}{u^{p+1}}),\\&
c^pd -cd^p - D(1 - \frac{v^{p+1}}{u^{p+1}}),
d^pa -bc^p + D \frac{w^2}{u^2} \frac{v^{p-1}}{u^{p-1}}) \end{split}\end{equation*} and consider $S$ as a subring of this quotient. 

We invert $u$ in $S$ and then kill $w$, i.\,e.\ we look at the ring $T := A'_U[u^{-1}]/(w)$. Thinking of this ring as a quotient of $R[u^{-1}]/(w)[a,b,c,d]$ we can rewrite the defining ideal as follows (note that we have $u^{p+1} + v^{p+1} = 0$),
\[
(a^pd - cb^p, b^pa - a^pb - (ad -bc), c^pd - cd^p -2( ad - bc), d^pa - bc^p).
\]
This ring is not the zero ring and we still must have $D \in k^{\times}$ in $T$. But this is impossible since the defining ideal is contained in $(a,b,c,d)$.

Since $\det A^{(p)} = (\det A)^p$ and by the multiplicative property of the determinant we obtain $\det A^{p-1} = (ad - bc)^{p-1} = \det H_U = -2$ in $A_U$.
\end{proof}

\begin{Ko}
The \'etale cover $X \to Y$ obtained via the Frobenius periodicity $F^\ast \mathcal{S} \to \mathcal{S}$ is not geometrically connected. 
\end{Ko}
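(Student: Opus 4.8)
The plan is to read off non-geometric-connectedness directly from Lemma~\ref{LNotConnected}, which supplies a global function $D = ad - bc \in S_0 = H^0(X, \mathcal{O}_X)$ with $D \notin k$ but $D^{p-1} = -2$. (That $D$ sits in degree zero is forced by $D^{p-1}$ being the scalar $-2$.) The whole point is that $X$ thus carries a non-constant regular function whose minimal polynomial over $k$ divides $T^{p-1} + 2$, and such a function splits $X$ up after base change.

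First I would note that $g : X \to Y$ is a finite \'etale cover of the smooth projective Fermat curve $Y$, so $X$ is smooth and proper over $k$; in particular it is geometrically reduced, and therefore $A := H^0(X, \mathcal{O}_X)$ is a finite \'etale $k$-algebra, i.e.\ a finite product of finite separable field extensions of $k$. For such $X$ the standard criterion reads: $X$ is geometrically connected if and only if $A \otimes_k \bar k$ has no nontrivial idempotents, equivalently if and only if $A = k$.

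The core step is then to exhibit the disconnection. Passing to $\bar k$ (or already to $k(\zeta)$ with $\zeta^{p-1} = -2$), I use that for $p \geq 3$ the polynomial $T^{p-1} + 2$ is separable with $p-1 \geq 2$ distinct roots, namely $\eta\zeta$ as $\eta$ runs over the $(p-1)$-th roots of unity, which in characteristic $p$ are exactly $\mathbb{F}_p^{\times}$. From $\prod_{\eta}(D - \eta\zeta) = D^{p-1} + 2 = 0$ and the fact that the factors are pairwise coprime (their differences lie in $\bar k^{\times}$), the Chinese Remainder Theorem splits $A \otimes_k \bar k$ into $\geq 2$ nonzero factors, producing a nontrivial idempotent. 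A nontrivial idempotent in $H^0(X_{\bar k}, \mathcal{O})$ gives a clopen decomposition of $X_{\bar k}$, so $X_{\bar k}$ is disconnected, i.e.\ $X$ is not geometrically connected.

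The one point needing care is the nontriviality of the resulting idempotents, i.e.\ that $D$ does not degenerate to a single value $\eta\zeta$ on all of $X_{\bar k}$; this is exactly where $D \notin k$ enters. Indeed, were $X_{\bar k}$ connected, then $A \otimes_k \bar k$ would be local and reduced over $\bar k$, hence equal to $\bar k$, forcing $A = k$ and contradicting Lemma~\ref{LNotConnected}. I expect this bookkeeping, rather than any genuinely hard computation, to be the main obstacle; everything else (finiteness and \'etaleness of $g$, separability of $T^{p-1}+2$) is routine.
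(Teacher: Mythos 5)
Your proposal is correct and follows essentially the same route as the paper: both arguments reduce to the observation that if $X$ were geometrically connected then $H^0(X,\mathcal{O}_X)$ would have to equal $k$ (after passing to $\bar k$), which contradicts $ad-bc \notin k$ from Lemma \ref{LNotConnected}. The explicit CRT splitting of $T^{p-1}+2$ you include is a valid elaboration but not logically needed beyond that core step.
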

\begin{proof}
As $X$ is \'etale the section ring is a direct product of normal domains. Assuming that $k$ is algebraically closed, the section ring is a domain if and only if $H^0(X, \mathcal{O}_X) = k$. As we have seen $ad - bc \in H^0(X, \mathcal{O}_X)$. So if $X$ were connected then $ad -bc \in k$. But this is not the case by Lemma \ref{LNotConnected}.
\end{proof}

\begin{Prop}
\label{PNumberIrrComp}
Assume that $k$ contains a $(p-1)$th root of $-2$ which we call $\eta$. Then $X$ has exactly $p- 1$ connected components which are all isomorphic. The connected components $X_i$ are isomorphic to $\Proj S/(ad - bc + \zeta^i \eta)$, where $\zeta$ is a primitve $(p-1)$th root of unity\footnote{In other words, any generator of $\mathbb{F}_p^\times$.} and $S$ denotes the section ring associated to $g^\ast \mathcal{O}_Y(1)$ (i.\,e.\ the one described in Proposition \ref{PSectionRing}).
\end{Prop}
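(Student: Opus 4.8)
The plan is to exploit the single relation $D^{p-1}=-2$ of Lemma~\ref{LNotConnected}, where $D=ad-bc\in H^0(X,\mathcal O_X)$, to split the section ring, then to produce a group action permuting the pieces, and finally to identify each piece as one connected component. First I would note that over $k$ the polynomial $f(T)=T^{p-1}+2$ is separable: since $\chara k=p\ge 3$ we have $f'(T)=(p-1)T^{p-2}=-T^{p-2}$ and $f(0)=2\neq 0$, so $\gcd(f,f')=1$. Because $\eta^{p-1}=-2$ and $\mathbb F_p^\times=\langle\zeta\rangle\subseteq k$, each $\zeta^i\eta$ is a root, so $f(T)=\prod_{i=0}^{p-2}(T+\zeta^i\eta)$ with $p-1$ distinct roots. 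Consequently $\prod_{i=0}^{p-2}(D+\zeta^i\eta)=D^{p-1}+2=0$ in $S$, and the principal ideals $I_i=(D+\zeta^i\eta)$ are pairwise comaximal since $(D+\zeta^i\eta)-(D+\zeta^j\eta)=(\zeta^i-\zeta^j)\eta\in k^\times$ for $i\neq j$. As the $I_i$ are principal, $\prod_i I_i=(D^{p-1}+2)=0$, hence $\bigcap_i I_i=\prod_i I_i=0$ and the Chinese Remainder Theorem gives a graded ring isomorphism $S\xrightarrow{\sim}\prod_{i=0}^{p-2}S/I_i$ (graded because $D$ has degree $0$, so each $I_i$ is homogeneous). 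Passing to $\Proj$ and using $X\cong\Proj S$ yields $X=\coprod_{i=0}^{p-2}X_i$ with $X_i:=\Proj S/(ad-bc+\zeta^i\eta)$ a disjoint union of closed and open subschemes; this already proves the last assertion.

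Next I would show that the $X_i$ are mutually isomorphic. Right multiplication $A\mapsto Ag$, $B\mapsto Bg$ by a constant $g\in\GL_2(\mathbb F_p)$ preserves the defining equations $A^{(p)}=H_UA$ and $B^{(p)}=H_WB$ (because $g^{(p)}=g$) and is compatible with the gluing $A=TB$; it therefore induces a $k$-automorphism of $X$ under which $D=\det A\mapsto(\det g)\,D$. Since $\det\colon\GL_2(\mathbb F_p)\to\mathbb F_p^\times$ is surjective, I may pick $g$ with $\det g=\zeta$; the resulting automorphism satisfies $I_j\mapsto I_{j-1}$, so it permutes the ideals $I_i$ cyclically and hence the pieces $X_i$ transitively. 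Thus all $X_i$ are isomorphic over $k$.

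It remains to show that $X$ has exactly $p-1$ connected components, and this is the step I expect to be the crux. By the two preceding paragraphs it suffices to prove that one (hence every) $X_i$ is connected, equivalently that the idempotents furnished by the Chinese Remainder Theorem are primitive, equivalently that $H^0(X,\mathcal O_X)=k[D]\cong k^{p-1}$. I would argue this via the monodromy of the Lange--Stuhler cover: $g\colon X\to Y$ is the cover attached to the representation $\rho\colon\pi_1^{\mathrm{et}}(Y)\to\GL_2(\mathbb F_p)$ induced by the periodicity $\alpha$, and the number of connected components of $X$ equals the number of orbits of $M:=\im\rho$ on the fibre $\GL_2(\mathbb F_p)$, namely $[\GL_2(\mathbb F_p):M]$. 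The $\det$-decomposition above already forces $M\subseteq\operatorname{SL}_2(\mathbb F_p)$ and at least $p-1$ components, so the whole question reduces to proving $M=\operatorname{SL}_2(\mathbb F_p)$.

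The hard part, then, is to pin down $M$ exactly. My proposed route is to use that $\mathcal S=\Syz(u^2,v^2,w^2)(3)$ is stable, so that $\rho$ is irreducible and $M$ lies in no Borel subgroup, together with the fact that the cover has wild (Frobenius) origin, so $p\mid |M|$. By Dickson's classification of the subgroups of $\operatorname{SL}_2(\mathbb F_p)$ of order divisible by $p$, an irreducible such subgroup is all of $\operatorname{SL}_2(\mathbb F_p)$ when $p\ge 5$, the remaining case $p=3$ being settled by a direct inspection of $\operatorname{SL}_2(\mathbb F_3)$. Verifying irreducibility of $\rho$ from the stability of $\mathcal S$ and confirming $p\mid |M|$ are the delicate points on which the argument hinges; as an entirely explicit alternative, consistent with the computational tenor of the paper, one could instead bypass monodromy and prove directly that the ring $S/(ad-bc+\zeta^i\eta)$ of Proposition~\ref{PSectionRing} is an integral domain.
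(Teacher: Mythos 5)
Your first two steps are sound and essentially match the paper: the CRT splitting of $S$ along the separable polynomial $T^{p-1}+2$ is a cleaner packaging of the paper's use of normality (writing $S=S_1\times\cdots\times S_n$ with $S_j$ normal domains and sorting the factors by the value $\zeta^{i_j}\eta$ that $D=ad-bc$ takes on each), and your right multiplication by $g\in\GL_2(\mathbb{F}_p)$ with $\det g=\zeta$ is exactly the paper's automorphism, which uses the diagonal matrix with entries $\zeta^{-j},\zeta^{i}$. One caveat: the CRT decomposition alone does not yet ``prove the last assertion,'' since a priori some factors $S/I_i$ could be zero and the nonzero ones could be disconnected; both points still depend on the later steps (the paper handles nonemptiness by noting that at least one $X_i$ is nonempty and that the automorphisms permute them transitively).

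The genuine gap is in the third step, and your own diagnosis is correct that everything reduces to $H^0(X,\mathcal{O}_X)=k[D]$. But the route you propose does not close: neither the irreducibility of $\rho$ nor $p\mid |M|$ is established (``wild Frobenius origin'' is not an argument for the latter), and the irreducibility input is in fact unavailable in general -- for $p=3$ the paper's closing Example notes that $\Syz(u^2,v^2,w^2)(3)$ is the \emph{trivial} bundle on the Fermat quartic, so it is not stable, while the Proposition is asserted for all $p\geq 3$. The paper avoids monodromy entirely: from the explicit presentation of $S$ in Proposition \ref{PSectionRing} -- generated over $R$ by elements of total degree $1$ together with the single degree-zero unit $(\alpha\delta-\beta\gamma)^{-1}=(ad-bc)^{-1}$, which by Lemma \ref{LNotConnected} is a polynomial in $ad-bc$ -- one reads off that the degree-zero graded piece of $S$ is $k[ad-bc]$. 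Hence each $T_i=S/(D+\zeta^i\eta)$ has degree-zero part $k$; since $T_i$ is a sub-product of the normal domain factors of $S$, it must consist of exactly one factor, i.e.\ be a domain, which forces $n=p-1$. This elementary computation is precisely the missing ingredient, and it is in effect the ``explicit alternative'' you mention at the end (proving directly that $S/(D+\zeta^i\eta)$ is a domain); to complete your proof you should carry that out rather than appeal to stability and Dickson's classification.
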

\begin{proof}
We claim that the irreducible components of $X$ are the $X_i = \Proj S/(ad - bc + \zeta^i \eta)$ for $i = 1, \ldots, p-1$. At least one of the $X_i$ is non-empty, for otherwise all the $ad - bc +  \zeta^i \eta$ were units and thus their product $(ad - bc)^{p-1} + 2$ would be nonzero -- contradicting Lemma \ref{LNotConnected}.

Next we will show that the rings $T_i = S/(ad - bc + \zeta^i\eta )$ are all isomorphic. So fix indices $i, j$. We obtain an automorphism of $A_U[w^{-1}, u, u^{-1}]$ by multiplying $a, c$ by $\zeta^{-j}$ and by multiplying $b ,d$ with $\zeta^{i}$. Since the ideal in $(3.3)$ is mapped to itself this is well-defined. Furthermore, this induces an automorphism of $S$. Finally, $(ad - bc + \zeta^i \eta)$ is mapped to $(ad - bc + \zeta^j \eta)$. Hence, this also induces an isomorphism of $T_i$ with $T_j$. It follows in particular that all the $X_i$ are non-empty.

%Moreover, the zeroth graded components of the $T_i$ are precisely $k$ since $S_0 = k[ad - bc]$. Hence the $T_i$ are integral domains.
 We still have to show that the $X_i$ are the irreducible components. Since $S$ is normal, we have $S = S_1 \times \ldots \times S_n$, where the $S_i$ are normal integral $k$-domains. As $(ad -bc)^{p-1} = -2$ we must have that $ad -bc \mapsto ( \zeta^{i_1} \eta, \zeta^{i_2} \eta, \ldots, \zeta^{i_n} \eta)$. Here we see that the $T_i$ are therefore of the form $S_{j} \times \ldots \times S_{j + k}$. In particular, the $T_i$ are again normal. As the zeroth graded component of $S$ is $k[ad - bc]$ we obtain that the zeroth graded component of $T_i$ is $k$. Hence, the $T_i$ are integral domains. We therefore also obtain $n = p-1$ and the $X_i$ have to be the irreducible components.
 %Since the $T_i$ are integral domains we obtain $n = p-1$ and the $X_i$ are the irreducible components.
\end{proof}

\begin{Le}
\label{LMatrizenverschiebung}
Let $R$ be a commutative ring and let $A,B,C$ square matrices of dimension $n$ with entries in $R$. Assume furthermore that $\det B$ is invertible in $R$. Then the ideal generated by the entries of $G := (A B^{-1} - C)$ is equal to the ideal generated by the entries of $H := (A - CB)$  
\end{Le}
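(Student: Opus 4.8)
The plan is to exploit the two matrix identities $H = GB$ and $G = HB^{-1}$, and then to use the elementary observation that forming a matrix product can only shrink the ideal generated by the entries.

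First I would record the relations between $G$ and $H$. Since $B^{-1}B$ is the identity, a direct computation gives
\[
GB = (AB^{-1} - C)B = A - CB = H,
\]
and conversely $HB^{-1} = (A - CB)B^{-1} = AB^{-1} - C = G$. Thus $G$ and $H$ differ by right multiplication by the invertible matrix $B$ (respectively $B^{-1}$).

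Next I would invoke the following general principle: for any two $n \times n$ matrices $M, N$ with entries in $R$, each entry $(MN)_{ij} = \sum_k M_{ik} N_{kj}$ is an $R$-linear combination of the entries of $M$, and hence the ideal generated by the entries of $MN$ is contained in the ideal generated by the entries of $M$. Applying this to $H = GB$ shows at once that the ideal of entries of $H$ is contained in the ideal of entries of $G$.

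For the reverse inclusion I would apply the same principle to $G = HB^{-1}$. The one point requiring care — and precisely the step where the hypothesis on $\det B$ is used — is that $B^{-1}$ genuinely has entries in $R$, so that $HB^{-1}$ is a legitimate matrix over $R$ and the principle applies verbatim. This follows from the adjugate formula $B^{-1} = (\det B)^{-1}\operatorname{adj}(B)$: the entries of $\operatorname{adj}(B)$ are cofactor (hence polynomial) expressions in the entries of $B$ and so lie in $R$, while $(\det B)^{-1}$ lies in $R$ because $\det B$ is assumed to be a unit. Granting this, the principle yields that the ideal of entries of $G$ is contained in the ideal of entries of $H$, and combining the two inclusions gives the asserted equality. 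The argument is essentially formal; the only substantive ingredient is the invertibility of $\det B$, without which $B^{-1}$ need not be defined over $R$ and the reverse inclusion can break down.
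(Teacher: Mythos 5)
Your proof is correct and follows essentially the same route as the paper: both arguments use the identities $H = GB$ and $G = HB^{-1}$ together with the observation that entries of a product are $R$-linear combinations of entries of the left factor, and both invoke the adjugate formula $B^{-1} = (\det B)^{-1}B^{\#}$ to see that $B^{-1}$ has entries in $R$. Nothing further is needed.
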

\begin{proof}
Write $G = (g_{ij})$ and similarly for $B, H$.
Multiplying $G$ with $B$ from the right we obtain $\sum_j g_{ij} b_{jl} = h_{il}$. This proves one inclusion. The other inclusion is obtained by multiplying $H$ from the right by $B^{-1} = (\det B)^{-1} B^{\#}$, where $B^{\#}$ denotes the adjoint matrix.
\end{proof}

\begin{Prop}
\label{PDegreeCovering}
Assume that assume that $k$ contains a $(p-1)$th root of $-2$ and let $X_i = \Proj S/(r_i)$ be an irreducible component of $X = \Proj S$. Then the degree of the induced morphism $g_i: X_i \to Y = \Proj k[u,v,w]/(u^{2d} + v^{2d} - w^{2d})$ is $p (p^2-1)$.
\end{Prop}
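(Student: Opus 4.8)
The plan is to compute $\deg g_i$ as the number of points of a single geometric fibre, using that $g_i$ is finite \'etale. Since $g$ is \'etale and $X_i$ is a connected component of $X$, the restriction $g_i \colon X_i \to Y$ is again finite \'etale, and being a nonempty finite \'etale morphism over the connected scheme $Y$ it is surjective; hence every geometric fibre is nonempty and, because a finite \'etale algebra over an algebraically closed field $\Omega$ is just $\Omega^{\deg}$, the cardinality of any such fibre equals $\deg g_i$. It therefore suffices to count the points of the fibre over one geometric point of $U = D_+(u)$.

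On that chart I would first put the defining equations of $A_U$ into a workable shape. Applying Lemma \ref{LMatrizenverschiebung} with $A^{(p)}$ in place of the lemma's $A$, with $A$ in place of its $B$ (its determinant $(\det A)^p$ is invertible in $A_U$), and with $H_U$ in place of its $C$, the ideal $((A^{(p)}A^{-1} - H_U)_{ij})$ coincides with $((A^{(p)} - H_U A)_{ij})$. Thus for a geometric point $y$ of $U$ with residue field $\Omega$ and $H := H_U(y) \in \GL_2(\Omega)$ (which is invertible for every $y \in U$, as $\det H_U = -2 \neq 0$), the fibre of $g$ over $y$ is identified with $\{A \in \GL_2(\Omega) \mid A^{(p)} = H A\}$, where $A^{(p)}$ is the entrywise $p$-th power; the extra condition $ad - bc = -\zeta^i\eta$ defining $X_i$ becomes $\det A = -\zeta^i\eta$.

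The key point is that over a field of characteristic $p$ the entrywise Frobenius is multiplicative, $(MN)^{(p)} = M^{(p)}N^{(p)}$. Fixing one solution $A_0$ (which exists by surjectivity) and writing $A = A_0 B$, we get $A^{(p)} = A_0^{(p)} B^{(p)} = H A_0 B^{(p)}$, so $A^{(p)} = HA$ is equivalent to $B^{(p)} = B$, i.e. $B \in \GL_2(\mathbb{F}_p)$. Under this bijection the determinant constraint reads $\det B = -\zeta^i\eta/\det A_0$, which lies in $\mathbb{F}_p^\times$ since its $(p-1)$-th power is $(-2)/(-2) = 1$ by Lemma \ref{LNotConnected}. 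The admissible $B$ therefore form a single coset of $\mathrm{SL}_2(\mathbb{F}_p)$, of cardinality $|\mathrm{SL}_2(\mathbb{F}_p)| = |\GL_2(\mathbb{F}_p)|/(p-1) = p(p^2-1)$. This gives $\deg g_i = p(p^2-1)$, consistent with the total cover having degree $|\GL_2(\mathbb{F}_p)| = p(p-1)^2(p+1)$ split over the $p-1$ components.

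The step I expect to be the main obstacle is the reduction of the fibre to the matrix equation $A^{(p)} = HA$: one must confirm that $\Spec A_U \to U$ is the restriction of $g$, so that the fibre of $g_i$ over $y$ is exactly $(A_U/(\det A + \zeta^i\eta)) \otimes \kappa(y)$, and that the \'etaleness guarantees this base change is reduced, so that literal point-counting computes the degree; one must also verify that Lemma \ref{LMatrizenverschiebung} applies verbatim with $A^{(p)}$. Once the fibre is realized as a translate of $\GL_2(\mathbb{F}_p)$ cut out by a fixed determinant value, the determinant bookkeeping and the order computation $|\mathrm{SL}_2(\mathbb{F}_p)| = p(p^2-1)$ are routine.
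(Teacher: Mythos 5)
Your argument is correct, and it reaches the count by a genuinely different route than the paper in the decisive step. Both proofs begin the same way: restrict to the chart $D_+(u)$, apply Lemma \ref{LMatrizenverschiebung} to replace the relations $A^{(p)}A^{-1}=H_U$ by $A^{(p)}=H_U A$, and use \'etaleness to reduce the degree to a point count in a geometric fibre. The paper then specialises to the particular point $(u,v,w)=(1,0,1)$, where $H_U$ becomes $\left(\begin{smallmatrix}0&2\\1&0\end{smallmatrix}\right)$, eliminates $a,b$ to rewrite the fibre ideal as $(c^{p^2-1}-2,\ d^{p^2-1}-2,\ (cd^p-c^pd)^{p-1}+2)$, counts solutions explicitly ($(p^2-1)\cdot p(p-1)$ for the whole cover), and finally divides by $p-1$ via the isomorphism of the components from Proposition \ref{PNumberIrrComp}. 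You instead exploit that the entrywise Frobenius is multiplicative in characteristic $p$, so that after fixing one solution $A_0$ the fibre of $g$ is the coset $A_0\cdot\GL_2(\mathbb{F}_p)$, and the determinant condition $\det A=-\zeta^i\eta$ cuts out a coset of $\mathrm{SL}_2(\mathbb{F}_p)$ of cardinality $p(p^2-1)$. Your torsor argument buys uniformity (it works at an arbitrary geometric point of $U$, with no need to check that $H_U(y)$ degenerates nicely at a chosen point beyond $\det H_U=-2\neq 0$), avoids the explicit elimination, and does not invoke Proposition \ref{PNumberIrrComp}; the paper's computation is more elementary but tied to the special fibre and to the prior isomorphism of components. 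The auxiliary points you flag are all in order: $(\det A_0)^{p-1}=\det H_U=-2$ forces $\det B\in\mathbb{F}_p^\times$, the values $-\zeta^i\eta$ exhaust the $(p-1)$th roots of $-2$ because $-1\in\mu_{p-1}$ for odd $p$, and the existence of $A_0$ follows from surjectivity of the finite \'etale map onto the connected curve $Y$.
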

\begin{proof}
The degree of the morphism is the $k$-dimension of the global sections of a fiber. Fix the point $u, w =1$, $v=0$ on $Y$. Then the matrix $H_U$ in Proposition \ref{PLocalFIsoms} reduces to
\[
 \begin{pmatrix}
  0 & 2\\
1 & 0
 \end{pmatrix}.
\] So by Lemma \ref{LMatrizenverschiebung} we obtain that the fiber (of the whole covering) is given by modding out $(2c -a^p, 2d - b^p, a - c^p, b - d^p, (ad - bc)^{p-1} + 2)$ in $k[a,b,c,d]$.  Observe, that $c$ and $d$ are units in this quotient ring. Indeed, modding out $c$ we obtain $a =0$ from the third generator and therefore, using the last generator, $2 = 0$. Hence, $c$ had to be a unit. The proof for $d$ works similarly.
We may thus rewrite this ideal as $(c^{p^2 -1} - 2, d^{p^2 -1} - 2, (cd^p - c^pd)^{p-1} + 2)$.

In counting ($\bar{k}$-valued) points we see from the first two generators that points $(c,d)$ have to satisfy $d \mapsto \zeta c$ and $c^{p^2 -1} =2$, where $\zeta$ is a $(p^2-1)$th root of unity. Furthermore, we must have $\zeta^{p-1} \neq 1$. Otherwise we would obtain a contradiction from the last equation. We claim that all these remaining choices yield points.

Indeed, we then have \[\begin{split}(cd^p - c^p d)^{p-1} &= (c^{p+1} \zeta^p -c^{p+1} \zeta)^{p-1} = c^{p^2 -1} \zeta^{p-1} (\zeta^{p-1} - 1)^{p-1} \\&= 2 \zeta^{p-1} \frac{\zeta^{p(p-1)} -1}{\zeta^{p-1} -1} = 2 \frac{\zeta^{(p+1)(p-1)} - \zeta^{p-1}}{\zeta^{p-1} -1} = -2.\end{split}\]

So the dimension of a fiber of the whole covering is $(p^2 -1) (p^2 -1 - (p-1)) = (p^2 -1) p (p-1)$.
Since any two irreducible components are isomorphic by Proposition \ref{PNumberIrrComp} we obtain the dimension of the fiber over an irreducible component by dividing by the number of irreducible components, which is $p-1$ -- whence the claim. 
\end{proof}

Summing up what we have proved we obtain

\begin{Theo}
Let $k$ be a field of characteristic $p = 2d -1$ containing a $(p-1)$th root of $-2$, $R = k[u,v,w]/(u^{p+1} + v^{p+1} - w^{p+1})$, $Y = \Proj R$ and $\mathcal{S} = \Syz(u^2, v^2, w^2)(3)$. Then there is a Frobenius periodicity $F^\ast( \Syz(u^2, v^2, w^2)(3)) \cong \Syz(u^2, v^2, w^2)(3)$. The trivialising \'etale cover $g:X \to Y$ obtained via this periodicity along the construction outlined in \cite[Satz 1.4]{langestuhler} is given by \[\Proj R[w \alpha, w \beta ,w \gamma, w \delta, \alpha \frac{u^2}{w} + \gamma \frac{v^2}{w}, \beta \frac{u^2}{w} + \delta \frac{v^2}{w}, (\alpha\delta -\beta\gamma)^{-1}]\] viewed as a subring of $A_U[u, u^{-1}, w^{-1}]$, where \begin{equation*}\begin{split}A_U &= \mathcal{O}_Y(D_+(u))[a,b,c,d, \det A^{-1}]/(\det A^{-1} (a^pd - cb^p) - \frac{v^2}{u^2} \frac{w^{p-1}}{u^{p-1}},\\&\det A^{-1}(b^pa - a^pb) - (2 + \frac{v^{p+1}}{u^{p+1}}),
\det A^{-1} (c^pd -cd^p) -(1 - \frac{v^{p+1}}{u^{p+1}}),\\& \det A^{-1} (d^pa -bc^p) + \frac{w^2}{u^2} \frac{v^{p-1}}{u^{p-1}})\end{split}\end{equation*} and 
\begin{align*}
a &= -\frac{w}{u} \gamma&  c &= \frac{u}{w}\alpha + \frac{v^2}{uw} \gamma \\
b &= - \frac{w}{u} \delta& d &= \frac{u}{w} \beta  + \frac{v^2}{uw} \delta. 
\end{align*}
This cover consists of $p-1$ irreducible components $X_i$ which are all isomorphic. The genus of an irreducible component is $p(p^2-1)(\frac{p(p-1)}{2} -1) +1$ and the degree of $g_i = g\vert_{X_i}: X_i \to Y$ is $p (p^2-1)$.
\end{Theo}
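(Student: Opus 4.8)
The plan is to read off almost the entire statement from the results already proved, so that the single remaining task is the genus computation. The Frobenius periodicity is Corollary \ref{KoFrobPeriod}; the explicit form of the trivialising cover $g\colon X \to Y$ together with its section ring $S$ is Proposition \ref{PSectionRing}; the decomposition of $X$ into $p-1$ mutually isomorphic irreducible components $X_i$ is Proposition \ref{PNumberIrrComp}; and the equality $\deg g_i = p(p^2-1)$ is Proposition \ref{PDegreeCovering}. I would simply cite these in turn, so that only the genus of a component is left to establish.

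For the genus I would argue via the Riemann-Hurwitz formula. First I would record that $Y$ is a smooth plane curve of degree $p+1$ --- smoothness because $\gcd(p+1,p)=1$, so the Fermat equation defines a smooth curve --- whence $g_Y = \frac{p(p-1)}{2}$. Next I would check that each $g_i\colon X_i \to Y$ is finite \'etale: $X_i$ is open and closed in $X$, so the restriction of the \'etale cover $g$ to $X_i$ remains \'etale, and it is finite as a component of a finite morphism. By Proposition \ref{PNumberIrrComp} the zeroth graded piece of the section ring of $X_i$ equals $k$, so $X_i$ is a smooth projective geometrically connected curve and its genus is well defined and stable under base change to $\bar{k}$.

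Since $g_i$ is unramified, Riemann-Hurwitz reads $2g_{X_i}-2 = (\deg g_i)(2g_Y - 2)$. Substituting $\deg g_i = p(p^2-1)$ and $2g_Y - 2 = p(p-1)-2$ and solving for $g_{X_i}$ yields exactly $p(p^2-1)\bigl(\frac{p(p-1)}{2}-1\bigr)+1$, as claimed. I do not expect any genuine obstacle in this final assembly: the substantive work --- in particular the fibre analysis underlying the degree --- was already carried out in Proposition \ref{PDegreeCovering}, and the genus step is a one-line application of Riemann-Hurwitz once \'etaleness of $g_i$ and the base genus $g_Y$ are in hand.
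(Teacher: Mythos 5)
Your proposal is correct and follows the paper's own proof exactly: everything except the genus is read off from Corollary \ref{KoFrobPeriod} and Propositions \ref{PSectionRing}, \ref{PNumberIrrComp}, and \ref{PDegreeCovering}, and the genus is obtained from the Riemann--Hurwitz formula for the unramified cover $g_i$ using $g_Y = \frac{p(p-1)}{2}$ and $\deg g_i = p(p^2-1)$. The arithmetic $2g_{X_i}-2 = p(p^2-1)\bigl(p(p-1)-2\bigr)$ indeed gives the stated genus.
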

\begin{proof}
The claim about the genus follows from Hurwitz' theorem (\cite[Corollary IV.2.4]{hartshornealgebraic}) since $g$ is unramified, because the genus of $Y$ is $\frac{p(p-1)}{2} $ and since $\deg g_i = p(p^2-1)$ by Proposition \ref{PDegreeCovering}.
\end{proof}

\begin{Bsp}
For $p =3$ the bundle $\Syz(u^2,u^2,u^2)(3)$ is already trivial on the Fermat quartic $Y = \Proj k[u,v,w]/(u^4 + v^4 - w^4)$ since it is the twisted pull back of a bundle over the quadric $u^2 + v^2 - w^2$. As the quadric is isomorphic to $\mathbb{P}^1_k$ the bundle splits.

For $p =5$ we consider $\mathcal{S} = \Syz(u^2, v^2, w^2)(3)$ on $Y = \Proj k[u,v,w]/(u^6 + v^6 - w^6)$, where $k$ is a field of characteristic $5$ containing a $4$th root of $-2$. Then $\mathcal{S}$ has no global sections and is thus not the trivial bundle. By Proposition \ref{PNumberIrrComp} the trivialising \'etale cover $X \to Y$ has $4$ connected components. Each component $X_1, X_2,X_3, X_4$ is a curve of genus $1261$ and the morphisms $X_i \to Y$ have degree $120$. 
\end{Bsp}

\bibliography{bibliothek.bib}
\bibliographystyle{amsplain}
\end{document}